\newtheorem{theorem}{Theorem}[section]
\newtheorem{lemma}[theorem]{Lemma}
\newtheorem{remark}{Remark}[section]
\numberwithin{equation}{section}
\begin{document}

\title{Fractional Elliptic problem with Finite many critical Hardy--Sobolev Exponents}




\author{Yu Su}
\address{School of Mathematics and Statistics, Central South University, Changsha, 410083 Hunan, P.R.China.}
\curraddr{}
\email{yizai52@qq.com}
\thanks{}

\author{Haibo Chen}
\address{School of Mathematics and Statistics, Central South University, Changsha, 410083 Hunan, P.R.China.}
\curraddr{}
\email{math\_chb@163.com}
\thanks{This research was supported by National Natural Science Foundation of China 11671403.}

\subjclass[2010]{Primary  35J20; 35J60.}

\keywords{Fractional Laplacian; finite many critical exponents; refined Hardy--Sobolev inequality; critical Hardy--Sobolev exponent.}

\date{}

\dedicatory{}

\begin{abstract}
In this paper,
we  consider the following problem:
$$
(-\Delta)^{s} u
-\frac{\zeta u}{|x|^{2s}}
=
\sum_{i=1}^{k}
\frac{|u|^{2^{*}_{s,\theta_{i}}-2}u}
{|x|^{\theta_{i}}}
,
\mathrm{~in~}
\mathbb{R}^{N},
$$
where
$N\geqslant3$,
$s\in(0,1)$,
$\zeta\in
\left[
0,4^{s}\frac{\Gamma(\frac{N+2s}{4})}{\Gamma(\frac{N-2s}{4})}
\right)$,
$2^{*}_{s,\theta_{i}}=\frac{2(N-\theta_{i})}{N-2s}$
are the critical Hardy--Sobolev  exponents,
the parameters
$\theta_{i}$
satisfy a suitable assumption.
By using Morrey space,
refinement of Hardy--Sobolev inequality
and variational method,
we
establish the existence of nonnegative solution.
Our result generalizes the result obtained by
Chen [Electronic J.  Differ. Eq.
(2018)
1--12 \cite{Chen2018}].
\par
\end{abstract}

\maketitle

\section{Introduction}
In this paper,
we  consider the following problem:
$$
(-\Delta)^{s} u
-\frac{\zeta u}{|x|^{2s}}
=
\sum_{i=1}^{k}
\frac{|u|^{2^{*}_{s,\theta_{i}}-2}u}
{|x|^{\theta_{i}}}
,
\mathrm{~in~}
\mathbb{R}^{N},
\eqno(\mathcal{P})
$$
where
$N\geqslant3$,
$s\in(0,1)$,
$\zeta\in
\left[
0,4^{s}\frac{\Gamma(\frac{N+2s}{4})}{\Gamma(\frac{N-2s}{4})}
\right)$,
$2^{*}_{s,\theta_{i}}=\frac{2(N-\theta_{i})}{N-2s}$
are the critical Hardy--Sobolev  exponents,
the parameters
$\theta_{i}$
satisfy the assumption:

\noindent
($H_{1}$)
$0<\theta_{1}<\cdots<\theta_{k}<2s$
($k\in \mathbb{N},~2< k<\infty$),
and
$2\theta_{k}-\theta_{1}\in(0,2s)$.

The fractional Laplacian $(-\Delta)^{s}$
of a function
$u: \mathbb{R}^{N}\rightarrow\mathbb{R}$
can be defined as
$$(-\Delta)^{s}u=\mathcal{F}^{-1}(|\xi|^{2s}\mathcal{F}(u)(\xi)),~\mathrm{for~all}~\xi\in \mathbb{R}^{N},$$
and for $u\in C^{\infty}_{0}(\mathbb{R}^{N})$,
where $\mathcal{F}(u)$ denotes the Fourier transform of $u$.
The operator
$(-\Delta)^{s}$
in
$\mathbb{R}^{N}$
is a nonlocal pseudo--differential operator taking the form
\begin{equation*}
\begin{aligned}
(-\Delta)^{s}u(x)
=
C_{N,s}\mathrm{P. V.}
\int_{\mathbb{R}^{N}}
\frac{u(x)-u(y)}{|x-y|^{N+2s}}
\mathrm{d}y,
\end{aligned}
\end{equation*}
where
$\mathrm{P. V.}$
is the Cauchy principal value and
$C_{N,s}$
is a normalization constant.
The fractional
power of Laplacian is the infinitesimal generator of L\'{e}vy stable diffusion process and arise
in anomalous diffusion in plasma, population dynamics, geophysical fluid dynamics, flames
propagation, minimal surfaces and game theory
(see
\cite{Applebaum2004,Caffarelli2012,Garroni2005}).

In previous twenty years, the nonlocal elliptic problems have been investigated by many researchers, for example,
\cite{BisciBook,Servadei2012,Servadei2013,Servadei2013DCDS}
for subcritical case (Sobolev),
\cite{O.Alves2016,Chen2015ZAMP,Bisci2016AA,Bisci2015,Servadei2015,Servadei2013CPAA,Yang2015DJDE,Dipierro2016CV}
for critical sobolev case,
\cite{d'Avenia2015,Mukherjee2017Fractional,Yang2017ANS,ZhangBL2017}
for critical Hardy--Littlewood--Sobolev case.
Moreover, a great attention has been devoted to
study the existence of solutions for the nonlocal problems with
critical Hardy--Sobolev term.
Yang \cite{Yang2015NA}
studied the following minimizing problem:
\begin{equation}\label{1}
\begin{aligned}
H_{0,\theta}:=
\inf_{u\in D^{s,2}(\mathbb{R}^{N})\setminus\{0\}}
\frac{
\int_{\mathbb{R}^{N}}
 |(-\Delta)^{\frac{s}{2}}u(x)|^{2}
\mathrm{d}x}
{\left(
\int_{\mathbb{R}^{N}}
\frac{|u|^{2^{*}_{s,\theta}}}{|x|^{\theta}}
\mathrm{d}x
\right)^{\frac{2}{2^{*}_{s,\theta}}}},
\end{aligned}
\end{equation}
where $s\in(0,\frac{N}{2})$
and
$\theta\in(0,2s)$.
By using
Morrey space,
refinement of Hardy--Sobolev inequality
and variational method,
he showed that
$H_{0,\theta}$
is achieved by a positive, radially symmetric and strictly decreasing function.

Ghoussoub and Shakerian \cite{Ghoussoub2016} investigated the following minimizing problem:

\begin{equation}\label{2}
\begin{aligned}
H_{\zeta,\theta}:=
\inf_{u\in D^{s,2}(\mathbb{R}^{N})\setminus\{0\}}
\frac{
\int_{\mathbb{R}^{N}}
 |(-\Delta)^{\frac{s}{2}}u(x)|^{2}
\mathrm{d}x
-
\zeta
\int_{\mathbb{R}^{N}}
\frac{ |u|^{2}}{|x|^{2s}}
\mathrm{d}x}
{\left(
\int_{\mathbb{R}^{N}}
\frac{|u|^{2^{*}_{s,\theta}}}{|x|^{\theta}}
\mathrm{d}x
\right)^{\frac{2}{2^{*}_{s,\theta}}}},
\end{aligned}
\end{equation}
where $s\in(0,1)$,
$N\geqslant2s$,
$\theta\in(0,2s)$
and
$\zeta\in
\left(
-\infty,4^{s}\frac{\Gamma(\frac{N+2s}{4})}{\Gamma(\frac{N-2s}{4})}
\right)$.
Applying Ekeland's variational principle,
for
$s\in(0,1)$,
$\theta\in(0,2s)$
and
$\zeta\in
\left[
0,4^{s}\frac{\Gamma(\frac{N+2s}{4})}{\Gamma(\frac{N-2s}{4})}
\right)$,
they showed that
any non--negative minimizer for
$H_{\zeta,\theta}$
is positive, radially symmetric and radially decreasing.
Furthermore,
they also considered the following problem in \cite{Ghoussoub2016},
\begin{equation}\label{3}
\begin{aligned}
(-\Delta)^{s} u
-\zeta\frac{u}{|x|^{2s}}
=
\frac{|u|^{2^{*}_{s,\theta}-2}u}{|x|^{\theta}}
+
|u|^{2^{*}_{s}-2}u,
\mathrm{~in~}
\mathbb{R}^{N},
\end{aligned}
\end{equation}
where $s\in(0,1)$,
$N\geqslant2s$,
$\theta\in(0,2s)$,
$\zeta\in
\left[
0,4^{s}\frac{\Gamma(\frac{N+2s}{4})}{\Gamma(\frac{N-2s}{4})}
\right)$,
and
$2^{*}_{s}=\frac{2N}{N-2s}$
is the critical Sobolev  exponent.
They combined the s--harmonic extension with the concentration compactness principle
to investigate
the existence of solutions for problem \eqref{3}.

Chen \cite{Chen2018} extended the study of problem \eqref{3} to the following problem:
\begin{equation}\label{4}
\begin{aligned}
(-\Delta)^{s} u
-\zeta\frac{ u}{|x|^{2s}}
=
\frac{|u|^{2^{*}_{s,\theta_{1}}-2}u}{|x|^{\theta_{1}}}
+
\frac{|u|^{2^{*}_{s,\theta_{2}}-2}u}{|x|^{\theta_{2}}},
\mathrm{~in~}
\mathbb{R}^{N},
\end{aligned}
\end{equation}
where
$N\geqslant3$,
$s\in(0,1)$,
$\zeta\in
\left[
0,4^{s}\frac{\Gamma(\frac{N+2s}{4})}{\Gamma(\frac{N-2s}{4})}
\right)$,
$\theta_{1},\theta_{2}\in(0,2s)$
and
$\theta_{1}\not=\theta_{2}$.
By using concentration compactness principle and mountain pass lemma,
they obtain the existence of positive solutions to problem
\eqref{3}.

It is worth pointing out that
there are many other kinds of problem involving two critical nonlinearities,
such as the Laplacian
$-\Delta$
(see \cite{Li2012,Seok2018,Zhong2016}),
the p--Laplacian
$-\Delta_{p}$
(see \cite{Pucci2009}),
the biharmonic operator
$\Delta^{2}$ (see \cite{Bhakta2015}),
and the fractional operator
$(-\Delta)^{s}$
(see \cite{Yang2017ANS,ZhangBL2017}).

{\bf A nature and interesting question arises:
can we extend the study of problem
(\ref{4})
in the finite many critical Hardy--Sobolev exponents?}

We answer above question in this paper.
By using the
refinement of Hardy--Sobolev inequality,
Morrey space
and Mountain Pass Theorem,
we establish the existence of  nontrivial weak solutions of problem
$(\mathcal{P})$.
The main
result of this paper is as follows.
\begin{theorem}\label{theorem1}
Let
$N\geqslant3$,
$s\in(0,1)$,
$\zeta\in
\left[
0,4^{s}\frac{\Gamma(\frac{N+2s}{4})}{\Gamma(\frac{N-2s}{4})}
\right)$
and
$( H_{1})$
hold.
Then
problem
$(\mathcal{P})$
has a nonnegative solution.
\end{theorem}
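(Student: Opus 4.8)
The plan is to realize the solution as a critical point, produced by the Mountain Pass Theorem, of the energy functional attached to $(\mathcal{P})$. I would work in $D^{s,2}(\mathbb{R}^{N})$: since $\zeta<4^{s}\Gamma(\tfrac{N+2s}{4})/\Gamma(\tfrac{N-2s}{4})$ lies strictly below the best constant in the fractional Hardy inequality, the quadratic form $\|u\|^{2}:=\int_{\mathbb{R}^{N}}|(-\Delta)^{s/2}u|^{2}\,\mathrm{d}x-\zeta\int_{\mathbb{R}^{N}}|x|^{-2s}u^{2}\,\mathrm{d}x$ is equivalent to the standard $D^{s,2}$-norm. To obtain a \emph{nonnegative} solution I would replace each nonlinearity by its positive part and set, with $p_{i}:=2^{*}_{s,\theta_{i}}$,
\[
I(u)=\frac12\|u\|^{2}-\sum_{i=1}^{k}\frac1{p_{i}}\int_{\mathbb{R}^{N}}\frac{(u^{+})^{p_{i}}}{|x|^{\theta_{i}}}\,\mathrm{d}x,
\]
which is $C^{1}$ on $D^{s,2}(\mathbb{R}^{N})$ by the Hardy--Sobolev embeddings $D^{s,2}\hookrightarrow L^{p_{i}}(\mathbb{R}^{N},|x|^{-\theta_{i}}\mathrm{d}x)$; testing $I'(u)=0$ against $u^{-}$ then forces $u^{-}\equiv0$, so any nontrivial critical point of $I$ is a nonnegative weak solution of $(\mathcal{P})$.

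First I would verify the mountain pass geometry. Since $(H_{1})$ gives $\theta_{i}<2s$, hence $p_{i}>2$ for every $i$, the Hardy--Sobolev inequalities yield $I(u)\ge\frac12\|u\|^{2}-C\sum_{i}\|u\|^{p_{i}}\ge\alpha>0$ on a small sphere $\|u\|=\rho$, while for fixed $u_{0}\ge0$, $u_{0}\not\equiv0$, the largest exponent $p_{1}=\max_{i}p_{i}$ dominates and $I(tu_{0})\to-\infty$ as $t\to+\infty$. Hence $c:=\inf_{\gamma\in\Gamma}\max_{t\in[0,1]}I(\gamma(t))\ge\alpha>0$, and the Mountain Pass Theorem supplies a Palais--Smale sequence $(u_{n})$ at level $c$. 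For boundedness I would use the \emph{smallest} exponent $p_{k}=\min_{i}p_{i}>2$: since $\tfrac1{p_{k}}-\tfrac1{p_{i}}\ge0$ for all $i$,
\[
I(u_{n})-\tfrac1{p_{k}}\langle I'(u_{n}),u_{n}\rangle\ge\Big(\tfrac12-\tfrac1{p_{k}}\Big)\|u_{n}\|^{2},
\]
and since the left-hand side equals $c+o(1)+o(\|u_{n}\|)$, the sequence $(u_{n})$ is bounded; along a subsequence $u_{n}\rightharpoonup u$ in $D^{s,2}$ and a.e. A routine argument (boundedness together with a.e.\ convergence of the nonlinear terms) then gives $I'(u)=0$, so $u\ge0$ solves $(\mathcal{P})$, and it only remains to show $u\not\equiv0$.

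This last point is the heart of the matter, and the hard part will be to run the concentration--compactness analysis simultaneously for all $k$ critical Hardy--Sobolev nonlinearities. Here I would invoke the refinement of the Hardy--Sobolev inequality in Morrey spaces, in the spirit of Yang and Ghoussoub--Shakerian: for $D^{s,2}$-bounded families it controls each $\int_{\mathbb{R}^{N}}|x|^{-\theta_{i}}|v|^{p_{i}}\,\mathrm{d}x$ by $\|v\|$ times a Morrey-type seminorm that is small whenever $v$ does not concentrate, so if that seminorm of $u_{n}$ went to zero then every Hardy--Sobolev integral would vanish and $\|u_{n}\|^{2}=\langle I'(u_{n}),u_{n}\rangle+\sum_{i}\int_{\mathbb{R}^{N}}|x|^{-\theta_{i}}(u_{n}^{+})^{p_{i}}\,\mathrm{d}x\to0$, forcing $c=0$, a contradiction. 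Therefore $(u_{n})$, or a scale-renormalized companion concentrated at the origin (the weights $|x|^{-2s}$ and $|x|^{-\theta_{i}}$ pinning the concentration there), retains a uniform Morrey mass; since every term of $(\mathcal{P})$ is invariant under the rescaling $u\mapsto\lambda^{(N-2s)/2}u(\lambda\,\cdot)$, the resulting weak limit is again a nontrivial solution of $(\mathcal{P})$. Equivalently — and this is the route I would actually push through — I would estimate $c$ strictly below the compactness threshold $\min_{1\le i\le k}\big(\tfrac12-\tfrac1{p_{i}}\big)H_{\zeta,\theta_{i}}^{(N-\theta_{i})/(2s-\theta_{i})}$ by testing $I$ along the ray $t\mapsto tU$, where $U$ is an extremal for the $H_{\zeta,\theta_{i_{0}}}$ realizing that minimum (its existence and decay being supplied by Ghoussoub--Shakerian), the remaining $k-1$ terms serving only to lower $\max_{t\ge0}I(tU)$. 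The structural hypothesis $(H_{1})$ — in particular $2\theta_{k}-\theta_{1}\in(0,2s)$, which makes $2^{*}_{s,2\theta_{k}-\theta_{1}}$ an admissible critical Hardy--Sobolev exponent and, by Hölder interpolation between the extreme admissible parameters $\theta_{1}$ and $2\theta_{k}-\theta_{1}$, lets one Morrey refinement dominate all $k$ of the nonlinear terms at once — is exactly what makes this step run uniformly in $i$, and it is the main obstacle. Once it is settled, $u$ is a nonnegative, nontrivial weak solution of $(\mathcal{P})$, proving Theorem~\ref{theorem1}.
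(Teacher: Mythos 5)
Your overall architecture --- mountain pass in $D^{s,2}(\mathbb{R}^{N})$ with the equivalent norm $\|\cdot\|_{\zeta}$, a bounded $(PS)_{c}$ sequence, the Morrey--space refinement of the Hardy--Sobolev inequality to exclude vanishing, a dilation renormalization to recover a nontrivial limit, and the reading of $(H_{1})$ as a H\"{o}lder--interpolation device tying the $k$ critical terms together --- is the same as the paper's. Two points, however, are genuine gaps rather than omitted routine details.

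First, the ``pinning at the origin.'' The Morrey lower bound produces points $x_{n}$ and radii $\sigma_{n}$ with $\sigma_{n}^{-2s}\int_{B(x_{n},\sigma_{n})}|u_{n}|^{2}\,\mathrm{d}y\geqslant C_{3}>0$, and the natural renormalization $\bar{u}_{n}=\sigma_{n}^{(N-2s)/2}u_{n}(x_{n}+\sigma_{n}\cdot)$ involves a \emph{translation}. The functional is invariant only under dilations about the origin, not under translations, so $\bar{u}_{n}$ is a Palais--Smale sequence for a \emph{translated} functional whose weights are $|x+\tfrac{x_{n}}{\sigma_{n}}|^{-\theta_{i}}$ and $|x+\tfrac{x_{n}}{\sigma_{n}}|^{-2s}$. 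One must prove that $\{x_{n}/\sigma_{n}\}$ stays bounded: the paper does this by showing that if $x_{n}/\sigma_{n}\to\infty$ the weights vanish in the limit, the weak limit $\bar{u}$ solves $(-\Delta)^{s}\bar{u}=0$, hence is constant, hence zero, contradicting $\bar{u}\not\equiv0$; only then can one pass to the purely dilated sequence $\tilde{u}_{n}=\sigma_{n}^{(N-2s)/2}u_{n}(\sigma_{n}\cdot)$, which is a genuine $(PS)_{c}$ sequence for $I$ with nontrivial weak limit. Your sentence about the weights ``pinning the concentration'' at the origin asserts exactly this conclusion without an argument; as written it is the missing step.

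Second, the two mechanisms you present as ``equivalent'' are not. The strict energy estimate $c<\min_{i}\bigl(\tfrac12-\tfrac1{p_{i}}\bigr)H_{\zeta,\theta_{i}}^{(N-\theta_{i})/(2s-\theta_{i})}$ (the paper's Lemma \ref{lemma7}, obtained just as you describe by testing along $t\mapsto tU$) is a separate ingredient: by itself it yields neither non--vanishing nor a nontrivial weak limit, and one must still run either concentration--compactness (the route used for $k=2$, which the paper explicitly sets aside for general $k$) or the Morrey renormalization. Conversely, the Morrey argument requires the non--vanishing of the weighted integrals, which is where $(H_{1})$ and the interpolation inequalities of Lemmas \ref{lemma3}--\ref{lemma4} enter (if one weighted integral vanishes along the sequence, all of them do, forcing $c=0$); your sketch of that part is correct. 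So the route you say you would ``actually push through'' is incomplete as described, while the route you sketch first is the paper's and lacks only the translation step above.
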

\begin{remark}
Problem
$(\mathcal{P})$ is invariant under the weighted dilation
$$u\mapsto \tau^{\frac{N-2s}{2}}u(\tau x).$$
Therefore, it is well known that the mountain pass theorem does not yield critical points,
but only the Palais--Smale sequences.
In this type of situation,
it is necessary to show the non--vanishing of Palais--Smale sequences.
There are finite  many critical Hardy--Sobolev  exponents in problem
$(\mathcal{P})$,
it is difficult to show the non--vanishing of Palais--Smale sequences.
In order overcome this difficult,
we establish two new inequalities in Lemma \ref{lemma3} and Lemma \ref{lemma4}.
By using the inequaliteis,
we show the non--vanishing of Palais--Smale sequences in Lemma \ref{lemma8}.
\end{remark}
\begin{remark}
The loss of compactness due to the critical Hardy--Sobolev
exponent which makes it difficult to verify the $(PS)_{c}$ condition,
where
$0<c<c^{*}$
in Lemma \ref{lemma7}.

In \cite{Chen2018},
by using concentration compactness principle,
the author verified that the energy functional associated with problem \eqref{4} satisfied $(PS)_{c}$ condition.
However,
there are finite many  critical Hardy--Sobolev  exponents.
Therefore, her method is not available.
We overcome this difficult by
the refinement of Hardy--Sobolev inequality
and
Morrey space.
\end{remark}
\section{Preliminaries}
Recall that the space
$H^{s}(\mathbb{R}^{N})$
is defined as
$$H^{s}(\mathbb{R}^{N})=\{u\in L^{2}(\mathbb{R}^{N})
|(-\Delta)^{\frac{s}{2}}u\in L^{2}(\mathbb{R}^{N})\}.$$
This space is endowed with the norm
$$\|u\|_{H}^{2}=\|(-\Delta)^{\frac{s}{2}}u\|_{2}^{2}+\|u\|_{2}^{2}.$$
The space
$D^{s,2}(\mathbb{R}^{N})$
is the completion of
$C^{\infty}_{0}(\mathbb{R}^{N})$
with respect to the norm
$$
\|u\|_{D}^{2}=\|(-\Delta)^{\frac{s}{2}}u\|_{2}^{2}.
$$
It is well known that
$\Lambda=4^{s}\frac{\Gamma^{2}(\frac{N+2s}{4})}{\Gamma^{2}(\frac{N-2s}{4})}$
is the best constant in the Hardy inequality
$$
\Lambda
\int_{\mathbb{R}^{N}}
\frac{ |u|^{2}}{|x|^{2s}}
\mathrm{d}x
\leqslant
\|u\|_{D}^{2}
,~~
\mathrm{for~any~}
u\in
D^{s,2}(\mathbb{R}^{N}).
$$
By Hardy inequality and $\zeta\in[0,\Lambda)$,
we derive that
$$
\|u\|_{\zeta}^{2}
=
\|u\|_{D}^{2}
-
\zeta
\int_{\mathbb{R}^{N}}
\frac{ |u|^{2}}{|x|^{2s}}
\mathrm{d}x,
$$
is an equivalent norm in
$D^{s,2}(\mathbb{R}^{N})$,
and the following inequalities hold:
$$\left(1-\frac{\zeta}{\Lambda}\right)\|u\|_{D}^{2}\leqslant\|u\|_{\zeta}^{2}\leqslant\|u\|_{D}^{2}.$$
For
$s\in(0,1)$
and
$\theta\in(0,2s)$,
we define the best constant:
\begin{equation}\label{10}
\begin{aligned}
S_{s}:=
\inf_{u\in D^{s,2}(\mathbb{R}^{N})\setminus\{0\}}
\frac{\|u\|_{D}^{2}
}
{\left(
\int_{\mathbb{R}^{N}}
|u|^{2^{*}_{s}}
\mathrm{d}x
\right)^{\frac{2}{2^{*}_{s}}}},
\end{aligned}
\end{equation}
where
$S_{s}$
is attained in
$\mathbb{R}^{N}$.
For
$s\in(0,1)$,
$\theta\in(0,2s)$
and
$\zeta\in[0,\Lambda)$,
we define the best constant:
\begin{equation}\label{11}
\begin{aligned}
H_{\zeta,\theta}:=
\inf_{u\in D^{s,2}(\mathbb{R}^{N})\setminus\{0\}}
\frac{\|u\|_{D}^{2}
-
\zeta
\int_{\mathbb{R}^{N}}
\frac{ |u|^{2}}{|x|^{2s}}
\mathrm{d}x}
{\left(
\int_{\mathbb{R}^{N}}
\frac{|u|^{2^{*}_{s,\theta}}}{|x|^{\theta}}
\mathrm{d}x
\right)^{\frac{2}{2^{*}_{s,\theta}}}}
\end{aligned}
\end{equation}
where
$H_{\zeta,\theta}$
is attained in
$\mathbb{R}^{N}$
(see \cite{Ghoussoub2016}).
A measurable function
$u:\mathbb{R}^{N}\rightarrow \mathbb{R}$
belongs to the Morrey space
$\|u\|_{\mathcal{L}^{p,\varpi}}(\mathbb{R}^{N})$
with
$p\in[1,\infty)$
and
$\varpi\in(0,N]$
if and only if
$$
\|u\|^{p}_{\mathcal{L}^{p,\varpi}(\mathbb{R}^{N})}
=
\sup_{R>0,x\in\mathbb{R}^{N}}
R^{\varpi-N}
\int_{B(x,R)}
|u(y)|^{p}
\mathrm{d}y
<\infty.
$$
\begin{lemma}
\label{lemma1}
$\left.\right.$
\cite[Theorem 1]{Palatucci2014}
For
$s\in (0,\frac{N}{2})$,
there exists
$C_{1}>0$
such that
for
$\iota$
and
$\vartheta$
satisfying
$\frac{2}{2^{*}_{s}}\leqslant\iota<1$,
$1\leqslant \vartheta<2^{*}_{s}=\frac{2N}{N-2s}$,
we have
\begin{align*}
\left(
\int_{\mathbb{R}^{N}}
|u|^{2^{*}_{s}}
\mathrm{d}x
\right)^{\frac{1}{2^{*}_{s}}}
\leqslant
C_{1}
\|u\|_{D}^{\iota}
\|u\|_{\mathcal{L}^{\vartheta,\frac{\vartheta(N-2s)}{2}}(\mathbb{R}^{N})}^{1-\iota},
\end{align*}
for any
$u\in D^{s,2}(\mathbb{R}^{N})$.
\end{lemma}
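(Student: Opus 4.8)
The plan is to prove the inequality first for $u\in C_{0}^{\infty}(\mathbb{R}^{N})$ and then extend it to all $u\in D^{s,2}(\mathbb{R}^{N})$ by density, since $D^{s,2}(\mathbb{R}^{N})$ is by definition the $\|\cdot\|_{D}$-completion of $C_{0}^{\infty}(\mathbb{R}^{N})$ and, by H\"older's inequality on balls (the powers of the radius cancel because $\vartheta(N-2s)/2=N\vartheta/2^{*}_{s}$), one has $\|u\|_{\mathcal{L}^{\vartheta,\vartheta(N-2s)/2}}\leqslant C\|u\|_{2^{*}_{s}}\leqslant CS_{s}^{-1/2}\|u\|_{D}$, so all three norms are continuous along a $\|\cdot\|_{D}$-Cauchy sequence (this also shows the asserted inequality is a genuine refinement of the Sobolev inequality). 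Next, it suffices to treat the endpoint exponent $\iota=2/2^{*}_{s}$: once
$$
\|u\|_{2^{*}_{s}}\leqslant C_{0}\,\|u\|_{D}^{2/2^{*}_{s}}\,\|u\|_{\mathcal{L}^{\vartheta,\vartheta(N-2s)/2}}^{1-2/2^{*}_{s}}
$$
is established, any $\iota\in(2/2^{*}_{s},1)$ follows by raising this to the power $\alpha=\frac{1-\iota}{1-2/2^{*}_{s}}\in(0,1)$, multiplying by the $(1-\alpha)$-th power of the classical fractional Sobolev inequality $\|u\|_{2^{*}_{s}}\leqslant S_{s}^{-1/2}\|u\|_{D}$, and observing that the exponents of $\|u\|_{D}$ and of $\|u\|_{\mathcal{L}^{\vartheta,\vartheta(N-2s)/2}}$ then equal $\iota$ and $1-\iota$; the resulting constant is at most $\max\{C_{0},S_{s}^{-1/2}\}$, uniformly in $\iota$ and $\vartheta$.

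For the endpoint inequality I would use a homogeneous Littlewood--Paley decomposition $u=\sum_{j\in\mathbb{Z}}P_{j}u$. The first ingredient is a Bernstein-type bound adapted to the Morrey norm: writing $P_{j}u=2^{jN}\varphi(2^{j}\cdot)\ast u$ with $\varphi\in\mathcal{S}(\mathbb{R}^{N})$, splitting the convolution integral over the dyadic annuli $\{\,|x-y|\sim 2^{k-j}\,\}$, $k\geqslant0$, applying H\"older with exponent $\vartheta$ on each ball $B(x,2^{k-j})$ together with the bound $\int_{B(x,R)}|u|^{\vartheta}\leqslant R^{N-\vartheta(N-2s)/2}\|u\|_{\mathcal{L}^{\vartheta,\vartheta(N-2s)/2}}^{\vartheta}$ coming from the definition of the Morrey norm, and summing the resulting series against the Schwartz decay of $\varphi$, one obtains
$$
\|P_{j}u\|_{\infty}\leqslant C\,2^{j(N-2s)/2}\,\|u\|_{\mathcal{L}^{\vartheta,\vartheta(N-2s)/2}}
$$
with $C=C(N,s)$ independent of $\vartheta$ (the $\vartheta$-dependence disappears because the relevant Morrey index is $(N-2s)/2$ for every $\vartheta$ and the H\"older constants on balls stay bounded as $\vartheta\geqslant1$ varies). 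Summing the geometric series over $j<j_{0}$ then gives $\|P_{<j_{0}}u\|_{\infty}\leqslant C'\,2^{j_{0}(N-2s)/2}\|u\|_{\mathcal{L}^{\vartheta,\vartheta(N-2s)/2}}$.

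The second ingredient is a layer-cake argument. Starting from $\|u\|_{2^{*}_{s}}^{2^{*}_{s}}=2^{*}_{s}\int_{0}^{\infty}t^{2^{*}_{s}-1}|\{|u|>t\}|\,dt$, for each $t>0$ choose $j_{0}=j_{0}(t)$ to be the largest integer with $2^{j_{0}}\leqslant\big(t/(2C'\|u\|_{\mathcal{L}^{\vartheta,\vartheta(N-2s)/2}})\big)^{2/(N-2s)}$, so that $\|P_{<j_{0}}u\|_{\infty}\leqslant t/2$ and hence $\{|u|>t\}\subseteq\{|P_{\geqslant j_{0}}u|>t/2\}$ up to a null set. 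By Chebyshev's inequality and the elementary identity $\|P_{j}v\|_{2}\leqslant 2^{s}2^{-js}\|P_{j}(-\Delta)^{s/2}v\|_{2}$ (both sides carry the multiplier supported where $|\xi|^{-s}\leqslant 2^{s}2^{-js}$),
$$
|\{|u|>t\}|\leqslant \frac{4}{t^{2}}\,\|P_{\geqslant j_{0}}u\|_{2}^{2}\leqslant \frac{4^{1+s}}{t^{2}}\sum_{j\geqslant j_{0}(t)}2^{-2js}\,\|P_{j}g\|_{2}^{2},\qquad g:=(-\Delta)^{s/2}u.
$$
Inserting this into the layer-cake integral and interchanging summation and integration by Tonelli, the inner integral $\int_{\{t:\,j_{0}(t)\leqslant j\}}t^{2^{*}_{s}-3}\,dt$ is bounded by $\int_{0}^{\,c\|u\|_{\mathcal{L}^{\vartheta,\vartheta(N-2s)/2}}2^{j(N-2s)/2}}t^{2^{*}_{s}-3}\,dt$, which converges (because $2^{*}_{s}>2$) and equals a constant times $\|u\|_{\mathcal{L}^{\vartheta,\vartheta(N-2s)/2}}^{2^{*}_{s}-2}\,2^{2js}$; here the arithmetic identity $\tfrac{(N-2s)(2^{*}_{s}-2)}{2}=2s$ is exactly what makes the weight $2^{2js}$ cancel the factor $2^{-2js}$ from the high-frequency estimate. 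What survives is $\sum_{j}\|P_{j}g\|_{2}^{2}\lesssim\|g\|_{2}^{2}=\|u\|_{D}^{2}$, yielding $\|u\|_{2^{*}_{s}}^{2^{*}_{s}}\leqslant C_{0}^{2^{*}_{s}}\|u\|_{\mathcal{L}^{\vartheta,\vartheta(N-2s)/2}}^{2^{*}_{s}-2}\|u\|_{D}^{2}$, i.e.\ the endpoint inequality.

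The step I expect to be the main obstacle is the bookkeeping in this last argument: one has to pick the frequency cutoff $j_{0}(t)$ so that the low-frequency part lies below $t/2$ in sup norm, and then check that after the Tonelli interchange the integral of $t^{2^{*}_{s}-3}$ up to the truncation level reproduces precisely the factor $2^{2js}$ needed to absorb the Bernstein weight $2^{-2js}$ in the high-frequency Chebyshev bound — this is where the precise Morrey index $(N-2s)/2=N/2^{*}_{s}$ is used, and why the endpoint exponent is exactly $2/2^{*}_{s}$. A secondary, purely technical point, needed only for the uniform form of the statement, is keeping the constant in the Morrey--Bernstein estimate independent of $\vartheta$.
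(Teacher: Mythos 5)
The paper offers no internal proof of this lemma: it is quoted directly from \cite[Theorem 1]{Palatucci2014}, so the only thing to compare against is that citation. Your argument is a correct, essentially self-contained proof of the cited result, and it runs along the same circle of ideas as the reference: your ``Morrey--Bernstein'' bound $\|P_{j}u\|_{\infty}\lesssim 2^{j(N-2s)/2}\|u\|_{\mathcal{L}^{\vartheta,\vartheta(N-2s)/2}}$ is precisely the embedding of the Morrey space into the negative Besov space $\dot{B}^{-(N-2s)/2}_{\infty,\infty}$, and your layer-cake/Chebyshev step over the frequency cutoff $j_{0}(t)$ is the classical G\'erard--Meyer--Oru improved Sobolev inequality; the reduction to the endpoint $\iota=2/2^{*}_{s}$ by interpolating with the plain Sobolev inequality and the density step are both sound, and the exponent bookkeeping you flag as the delicate point does close ($\vartheta(N-2s)/2=N\vartheta/2^{*}_{s}$ for the scale invariance, $(N-2s)(2^{*}_{s}-2)/2=2s$ for the cancellation of the Bernstein weight). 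Only two routine points would need a sentence in a written-up version: (i) the inclusion $\{|u|>t\}\subseteq\{|P_{\geqslant j_{0}}u|>t/2\}$ uses that the homogeneous Littlewood--Paley sum reproduces $u$ without polynomial corrections, which is legitimate because $u\in L^{2^{*}_{s}}(\mathbb{R}^{N})$ (or because you have already reduced to $u\in C^{\infty}_{0}$); (ii) passing from $\|P_{\geqslant j_{0}}u\|_{2}^{2}$ to $\sum_{j\geqslant j_{0}}\|P_{j}u\|_{2}^{2}$, and later from $\sum_{j}\|P_{j}g\|_{2}^{2}$ to $\|g\|_{2}^{2}$, requires the bounded-overlap (almost orthogonality) constant of the dyadic decomposition, which only affects the size of $C_{1}$. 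Neither is a gap.
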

We introduce the energy functional associated to  problem $(\mathcal{P})$ by
\begin{equation*}
\begin{aligned}
I(u)
=&
\frac{1}{2}
\|u\|_{\zeta}^{2}
-
\sum_{i=1}^{k}
\frac{1}{2^{*}_{s,\theta_{i}}}
\int_{\mathbb{R}^{N}}
\frac{|u|^{2^{*}_{s,\theta_{i}}}}
{|x|^{\theta_{i}}}
\mathrm{d}x.
\end{aligned}
\end{equation*}
The Nehari manifold associated with
problem $(\mathcal{P})$,
which is  defined by
$$\mathcal{N}=\{u\in D^{s,2}(\mathbb{R}^{N})|\langle I^{'}(u),u\rangle=0,~u\not=0 \},$$
and
$$
c_{0}
=\inf_{u\in\mathcal{N}}
I(u),
~
c_{1}
=\inf_{u\in D^{s,2}(\mathbb{R}^{N})}\max_{t\geqslant0}
I(tu)
~\mathrm{and}~
c
=
\inf_{\Upsilon\in\Gamma}
\max_{t\in [0,1]}
I(\Upsilon(t)),$$
where
$
\Gamma=
\{
\Upsilon\in C([0,1],D^{s,2}(\mathbb{R}^{N}))
:
\Upsilon(0)=0,
I(\Upsilon(1))<0
\}
$.
\section{Some key inequlities}
In this section,
we show some key inequalities.

In \cite{Yang2015NA,Yang2017ANS},
the authors obtained the refinement of Hardy--Sobolev inequality.
However,
their parameter $\tilde{\vartheta}$
satisfying (see \cite[Theorem 1]{Yang2015NA})
$$1\leqslant \tilde{\vartheta}<2^{*}_{s,\theta}.$$
It is easy to see that
$$
2^{*}_{s,\theta}
=\frac{2(N-\theta)}{N-2s}
<
\frac{2N}{N-2s}
=
2^{*}_{s},$$
for
$s\in (0,\frac{N}{2})$
and
$\theta\in(0,2s)$.
{\bf It is natural to ask the case of
$\tilde{\vartheta}\in[2^{*}_{s,\theta},2^{*}_{s})$}.
In next lemma,
we extend the parameter
$\tilde{\vartheta}$
from
$[1,2^{*}_{s,\theta})$
to
$[1,2^{*}_{s})$
.
\begin{lemma}\label{lemma2}
$\left.\right.$
[Refinement of Hardy--Sobolev inequality]
For
$s\in (0,\frac{N}{2})$
and
$\theta\in(0,2s)$,
there exists
$C_{2}>0$
such that
for
$\iota$
and
$\vartheta$
satisfying
$\frac{2}{2^{*}_{s}}\leqslant\iota<1$,
\textcolor{red}{$1\leqslant \vartheta<2^{*}_{s}$},
we have
\begin{align*}
\left(
\int_{\mathbb{R}^{N}}
\frac{|u|^{2^{*}_{s,\theta}}}{|x|^{\theta}}
\mathrm{d}x
\right)^{\frac{1}{2^{*}_{s,\theta}}}
\leqslant
C_{2}
\|u\|_{D}
^{\frac{\theta(N-2s)+\iota N(2s-\theta)}{2s(N-\theta)}}
\|u\|_{\mathcal{L}^{\vartheta,\frac{\vartheta(N-2s)}{2}}(\mathbb{R}^{N})}^{\frac{N(1-\iota)(2s-\theta)}{2s(N-\theta)}},
\end{align*}
for any
$u\in D^{s,2}(\mathbb{R}^{N})$.
\end{lemma}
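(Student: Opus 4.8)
The plan is to reduce the weighted inequality to the \emph{unweighted} refined Sobolev inequality of Lemma~\ref{lemma1}, by peeling off the singular weight $|x|^{-\theta}$ with H\"older's inequality and then absorbing the leftover term through the Hardy inequality recalled in Section~2. The crux is to pick the right H\"older exponents. I would set
$$
p=\frac{2s}{2s-\theta},\qquad p'=\frac{2s}{\theta},
$$
both strictly larger than $1$ precisely because $0<\theta<2s$, together with
$$
t=\frac{\theta(N-2s)}{2s(N-\theta)}\in(0,1),
$$
where $t\in(0,1)$ uses $0<\theta<2s<N$. Writing $\frac{|u|^{2^{*}_{s,\theta}}}{|x|^{\theta}}=|u|^{(1-t)2^{*}_{s,\theta}}\cdot\frac{|u|^{t2^{*}_{s,\theta}}}{|x|^{\theta}}$ and applying H\"older with exponents $p,p'$, a direct computation from $2^{*}_{s,\theta}=\frac{2(N-\theta)}{N-2s}$ and $2^{*}_{s}=\frac{2N}{N-2s}$ yields the three identities
$$
(1-t)\,2^{*}_{s,\theta}\,p=2^{*}_{s},\qquad t\,2^{*}_{s,\theta}\,p'=2,\qquad \theta p'=2s,
$$
so that
$$
\int_{\mathbb{R}^{N}}\frac{|u|^{2^{*}_{s,\theta}}}{|x|^{\theta}}\,\mathrm{d}x
\leqslant
\left(\int_{\mathbb{R}^{N}}|u|^{2^{*}_{s}}\,\mathrm{d}x\right)^{1/p}
\left(\int_{\mathbb{R}^{N}}\frac{|u|^{2}}{|x|^{2s}}\,\mathrm{d}x\right)^{1/p'}.
$$

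Next I would bound the two factors separately. Since $u\in D^{s,2}(\mathbb{R}^{N})$, the Hardy inequality gives $\int_{\mathbb{R}^{N}}\frac{|u|^{2}}{|x|^{2s}}\,\mathrm{d}x\leqslant\Lambda^{-1}\|u\|_{D}^{2}$, and Lemma~\ref{lemma1}, whose hypotheses on $\iota$ and $\vartheta$ are exactly those in force here, gives
$$
\left(\int_{\mathbb{R}^{N}}|u|^{2^{*}_{s}}\,\mathrm{d}x\right)^{1/p}
\leqslant
C_{1}^{\,2^{*}_{s}/p}\,\|u\|_{D}^{\,\iota 2^{*}_{s}/p}\,
\|u\|_{\mathcal{L}^{\vartheta,\frac{\vartheta(N-2s)}{2}}(\mathbb{R}^{N})}^{(1-\iota)2^{*}_{s}/p}.
$$
Multiplying the two bounds, inserting them into the H\"older estimate, and raising to the power $1/2^{*}_{s,\theta}$ produces an inequality of exactly the claimed shape, with $C_{2}=C_{1}^{\,2^{*}_{s}/(p\,2^{*}_{s,\theta})}\,\Lambda^{-1/(p'\,2^{*}_{s,\theta})}$ depending only on $N,s,\theta$.

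It then remains only to identify the exponents, a short computation using $2^{*}_{s}/2^{*}_{s,\theta}=N/(N-\theta)$, $1/p=(2s-\theta)/(2s)$ and $1/p'=\theta/(2s)$. One finds that the power of the Morrey norm equals
$$
\frac{(1-\iota)\,2^{*}_{s}}{2^{*}_{s,\theta}\,p}=\frac{N(1-\iota)(2s-\theta)}{2s(N-\theta)},
$$
while the power of $\|u\|_{D}$ equals
$$
\frac{\iota\,2^{*}_{s}}{2^{*}_{s,\theta}\,p}+\frac{2}{2^{*}_{s,\theta}\,p'}
=\frac{\iota N(2s-\theta)}{2s(N-\theta)}+\frac{\theta(N-2s)}{2s(N-\theta)}
=\frac{\theta(N-2s)+\iota N(2s-\theta)}{2s(N-\theta)},
$$
which are exactly the asserted exponents; as a sanity check the two powers sum to $1$, consistent with the homogeneity $u\mapsto\lambda u$, and both sides are invariant under the weighted dilation $u\mapsto\tau^{\frac{N-2s}{2}}u(\tau x)$. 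I do not anticipate a genuine obstacle: the only care needed is to verify $p,p'>1$ and $t\in(0,1)$ so that the H\"older splitting is legitimate and the three bookkeeping identities hold simultaneously — precisely where the hypothesis $\theta\in(0,2s)$ (together with $2s<N$) enters — and to observe that, since the H\"older step never touches the Morrey term, its exponent $\frac{\vartheta(N-2s)}{2}$ is inherited unchanged from Lemma~\ref{lemma1}. Routing the argument through the unweighted Lemma~\ref{lemma1}, rather than proving a weighted refinement directly as in \cite{Yang2015NA,Yang2017ANS}, is exactly what yields the wider admissible range $\vartheta\in[1,2^{*}_{s})$.
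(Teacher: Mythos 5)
Your proposal is correct and follows essentially the same route as the paper: the identity $t\,2^{*}_{s,\theta}=\theta/s$ shows your splitting $|u|^{(1-t)2^{*}_{s,\theta}}\cdot\frac{|u|^{t2^{*}_{s,\theta}}}{|x|^{\theta}}$ with exponents $\frac{2s}{2s-\theta},\frac{2s}{\theta}$ is exactly the paper's H\"older decomposition, followed by the same use of the Hardy inequality on the singular factor and Lemma~\ref{lemma1} on the $L^{2^{*}_{s}}$ factor. The exponent bookkeeping and the resulting constant also match the paper's.
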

\begin{proof}
By using H\"{o}lder inequality and fractional Hardy inequality,
we obtain
\begin{equation}\label{12}
\begin{aligned}
\int_{\mathbb{R}^{N}}
\frac{|u|^{2^{*}_{s,\theta}}}
{|x|^{\theta}}
\mathrm{d}x
=&
\int_{\mathbb{R}^{N}}
\frac{|u|^{\frac{\theta}{s}}}
{|x|^{\theta}}
\cdot
|u|^{\frac{2(N-\theta)}{N-2s}-\frac{\theta}{s}}
\mathrm{d}x\\
\leqslant&
\left(
\int_{\mathbb{R}^{N}}
\frac{|u|^{\frac{\theta}{s}\cdot\frac{2s}{\theta}}}
{|x|^{\theta\cdot\frac{2s}{\theta}}}
\mathrm{d}x
\right)
^{\frac{\theta}{2s}}
\left(
\int_{\mathbb{R}^{N}}
|u|^{\frac{(2s-\theta)N}{(N-2s)s}\cdot\frac{2s}{2s-\theta}}
\mathrm{d}x
\right)
^{1-\frac{\theta}{2s}}
\\
=&
\left(
\int_{\mathbb{R}^{N}}
\frac{|u|^{2}}
{|x|^{2s}}
\mathrm{d}x
\right)
^{\frac{\theta}{2s}}
\left(
\int_{\mathbb{R}^{N}}
|u|^{\frac{2N}{N-2s}}
\mathrm{d}x
\right)
^{\frac{2s-\theta}{2s}}
\\
\leqslant&
\left(
\frac{1}{\Lambda}
\right)
^{\frac{\theta}{2s}}
\|u\|_{D}
^{\frac{\theta}{s}}
\|u\|_{L^{2^{*}_{s}}(\mathbb{R}^{N})}^{\frac{N(2s-\theta)}{s(N-2s)}}.
\end{aligned}
\end{equation}
Combining (\ref{12}) and Lemma \ref{lemma1},
we have
\begin{equation*}
\begin{aligned}
\left(
\int_{\mathbb{R}^{N}}
\frac{|u|^{2^{*}_{s,\theta}}}{|x|^{\theta}}
\mathrm{d}x
\right)^{\frac{1}{2^{*}_{s,\theta}}}
\leqslant&
\left(
\frac{1}{\Lambda}
\right)
^{\frac{\theta(N-2s)}{4s(N-\theta)}}
\|u\|_{D}
^{\frac{\theta(N-2s)}{2s(N-\theta)}}
\|u\|_{L^{2^{*}_{s}}(\mathbb{R}^{N})}^{\frac{N(2s-\theta)}{2s(N-\theta)}}\\
\leqslant&
\left(
\frac{1}{\Lambda}
\right)
^{\frac{\theta(N-2s)}{4s(N-\theta)}}
\|u\|_{D}
^{\frac{\theta(N-2s)}{2s(N-\theta)}}
\left(
C_{1}
\|u\|_{D}^{\iota}
\|u\|_{\mathcal{L}^{\vartheta,\frac{\vartheta(N-2s)}{2}}}^{1-\iota}
\right)^{\frac{N(2s-\theta)}{2s(N-\theta)}}\\
=&
C_{2}
\|u\|_{D}
^{\frac{\theta(N-2s)+\iota N(2s-\theta)}{2s(N-\theta)}}
\|u\|_{\mathcal{L}^{\vartheta,\frac{\vartheta(N-2s)}{2}}(\mathbb{R}^{N})}
^{\frac{N(1-\iota)(2s-\theta)}{2s(N-\theta)}}.
\end{aligned}
\end{equation*}
\end{proof}

\begin{lemma}\label{lemma3}
Let
$s\in(0,\frac{N}{2})$
and
$0<\theta<\tilde{\theta}<2s$.
Then the inequality
\begin{equation*}
\begin{aligned}
\int_{\mathbb{R}^{N}}
\frac{|u|^{2^{*}_{s,\theta}}}
{|x|^{\theta}}
\mathrm{d}x
\leqslant
\left(
\int_{\mathbb{R}^{N}}
\frac{|u|^{2^{*}_{s,\tilde{\theta}}}}
{|x|^{\tilde{\theta}}}
\mathrm{d}x
\right)
^{\frac{\theta}{\tilde{\theta}}}
\left(
\int_{\mathbb{R}^{N}}
|u|^{2^{*}_{s}}
\mathrm{d}x
\right)
^{\frac{\tilde{\theta}-\theta}{\tilde{\theta}}},
\end{aligned}
\end{equation*}
holds for all
$u\in D^{s,2}(\mathbb{R}^{N})$.
\end{lemma}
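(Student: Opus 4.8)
The plan is to obtain the inequality as a direct application of H\"older's inequality, in the same spirit as the classical $L^{p}$-interpolation estimate. First I would split the integrand by writing
\begin{equation*}
\frac{|u|^{2^{*}_{s,\theta}}}{|x|^{\theta}}
=
\left(
\frac{|u|^{2^{*}_{s,\tilde{\theta}}}}{|x|^{\tilde{\theta}}}
\right)^{\frac{\theta}{\tilde{\theta}}}
\left(
|u|^{2^{*}_{s}}
\right)^{\frac{\tilde{\theta}-\theta}{\tilde{\theta}}},
\end{equation*}
and then check that this algebraic identity actually holds. The power of $|x|$ on the right-hand side is $\tilde{\theta}\cdot\frac{\theta}{\tilde{\theta}}=\theta$, which matches the left-hand side; the power of $|u|$ on the right-hand side is
\begin{equation*}
2^{*}_{s,\tilde{\theta}}\cdot\frac{\theta}{\tilde{\theta}}
+
2^{*}_{s}\cdot\frac{\tilde{\theta}-\theta}{\tilde{\theta}}
=
\frac{2}{N-2s}\cdot\frac{(N-\tilde{\theta})\theta+N(\tilde{\theta}-\theta)}{\tilde{\theta}}
=
\frac{2(N-\theta)}{N-2s}
=
2^{*}_{s,\theta},
\end{equation*}
so the decomposition is legitimate.

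Next I would apply H\"older's inequality to the product above with the conjugate exponents $p=\frac{\tilde{\theta}}{\theta}$ and $q=\frac{\tilde{\theta}}{\tilde{\theta}-\theta}$. These satisfy $\frac{1}{p}+\frac{1}{q}=\frac{\theta}{\tilde{\theta}}+\frac{\tilde{\theta}-\theta}{\tilde{\theta}}=1$, and both are strictly greater than $1$ precisely because $0<\theta<\tilde{\theta}$ (the first since $\tilde{\theta}>\theta$, the second since $\theta>0$). This yields at once
\begin{equation*}
\int_{\mathbb{R}^{N}}
\frac{|u|^{2^{*}_{s,\theta}}}{|x|^{\theta}}\,\mathrm{d}x
\leqslant
\left(
\int_{\mathbb{R}^{N}}
\frac{|u|^{2^{*}_{s,\tilde{\theta}}}}{|x|^{\tilde{\theta}}}\,\mathrm{d}x
\right)^{\frac{\theta}{\tilde{\theta}}}
\left(
\int_{\mathbb{R}^{N}}
|u|^{2^{*}_{s}}\,\mathrm{d}x
\right)^{\frac{\tilde{\theta}-\theta}{\tilde{\theta}}},
\end{equation*}
which is exactly the claimed inequality.

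I do not expect any genuine obstacle here: the statement is an interpolation inequality, and the only points requiring attention are the bookkeeping of exponents verified above and the finiteness of the two integrals on the right-hand side, which is automatic for $u\in D^{s,2}(\mathbb{R}^{N})$ by the Sobolev embedding $D^{s,2}(\mathbb{R}^{N})\hookrightarrow L^{2^{*}_{s}}(\mathbb{R}^{N})$ and the Hardy--Sobolev embedding into $L^{2^{*}_{s,\tilde{\theta}}}(\mathbb{R}^{N},|x|^{-\tilde{\theta}}\,\mathrm{d}x)$ (and if either integral is infinite, the inequality holds trivially). No concentration--compactness or regularity machinery is needed for this lemma.
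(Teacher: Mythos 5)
Your proof is correct and is essentially identical to the paper's: the authors use exactly the same splitting of the integrand into $\bigl(|u|^{2^{*}_{s,\tilde{\theta}}}/|x|^{\tilde{\theta}}\bigr)^{\theta/\tilde{\theta}}$ times $\bigl(|u|^{2^{*}_{s}}\bigr)^{(\tilde{\theta}-\theta)/\tilde{\theta}}$ and apply H\"older with the conjugate exponents $\tilde{\theta}/\theta$ and $\tilde{\theta}/(\tilde{\theta}-\theta)$. Your exponent bookkeeping checks out, so nothing further is needed.
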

\noindent
{\bf Proof.}
For any
$u\in D^{s,2}(\mathbb{R}^{N})$.
By using H\"{o}lder inequality and $0<\theta<\tilde{\theta}<2s$,
we obtain
\begin{equation*}
\begin{aligned}
\int_{\mathbb{R}^{N}}
\frac{|u|^{2^{*}_{s,\theta}}}
{|x|^{\theta}}
\mathrm{d}x
=&
\int_{\mathbb{R}^{N}}
\frac{|u|^{\frac{\theta}{\tilde{\theta}}\cdot\frac{2(N-\tilde{\theta})}{N-2s}}}
{|x|^{\theta}}
\cdot
|u|^{\frac{2N}{N-2s}\cdot\frac{\tilde{\theta}-\theta}{\tilde{\theta}}}
\mathrm{d}x\\
\leqslant&
\left(
\int_{\mathbb{R}^{N}}
\frac{|u|^{\frac{\theta}{\tilde{\theta}}\cdot\frac{2(N-\tilde{\theta})}{N-2s}\cdot\frac{\tilde{\theta}}{\theta}}}
{|x|^{\theta\cdot\frac{\tilde{\theta}}{\theta}}}
\mathrm{d}x
\right)
^{\frac{\theta}{\tilde{\theta}}}
\left(
\int_{\mathbb{R}^{N}}
|u|^{\frac{2N}{N-2s}\cdot\frac{\tilde{\theta}-\theta}{\tilde{\theta}}\cdot\frac{\tilde{\theta}}{\tilde{\theta}-\theta}}
\mathrm{d}x
\right)
^{1-\frac{\theta}{\tilde{\theta}}}
\\
=&
\left(
\int_{\mathbb{R}^{N}}
\frac{|u|^{\frac{2(N-\tilde{\theta})}{N-2s}}}
{|x|^{\tilde{\theta}}}
\mathrm{d}x
\right)
^{\frac{\theta}{\tilde{\theta}}}
\left(
\int_{\mathbb{R}^{N}}
|u|^{\frac{2N}{N-2s}}
\mathrm{d}x
\right)
^{\frac{\tilde{\theta}-\theta}{\tilde{\theta}}}.
\end{aligned}
\end{equation*}
\qed

\begin{lemma}\label{lemma4}
Let
$s\in(0,\frac{N}{2})$,
$0<\bar{\theta}<\theta<2s$
and
$2\theta-\bar{\theta}<2s$.
Then the inequality
\begin{equation*}
\begin{aligned}
\int_{\mathbb{R}^{N}}
\frac{|u|^{2^{*}_{s,\theta}}}
{|x|^{\theta}}
\mathrm{d}x
\leqslant
\left(
\int_{\mathbb{R}^{N}}
\frac{|u|^{2^{*}_{s,\bar{\theta}}}}
{|x|^{\bar{\theta}}}
\mathrm{d}x
\right)
^{\frac{1}{2}}
\left(
\int_{\mathbb{R}^{N}}
\frac{|u|^{2^{*}_{s,2\theta-\bar{\theta}}}}
{|x|^{2\theta-\bar{\theta}}}
\mathrm{d}x
\right)
^{\frac{1}{2}},
\end{aligned}
\end{equation*}
holds for all
$u\in D^{s,2}(\mathbb{R}^{N})$.
\end{lemma}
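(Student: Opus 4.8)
The plan is to recognise the integrand on the left as the product of the square roots of the two integrands on the right, and then to apply the Cauchy--Schwarz inequality (i.e.\ H\"{o}lder's inequality with conjugate exponents $2$ and $2$).

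First I would record that the hypotheses $0<\bar{\theta}<\theta<2s$ and $2\theta-\bar{\theta}<2s$ guarantee that both $\bar{\theta}$ and $2\theta-\bar{\theta}$ lie in $(0,2s)$: indeed $2\theta-\bar{\theta}=\theta+(\theta-\bar{\theta})>\theta>\bar{\theta}>0$, while $2\theta-\bar{\theta}<2s$ is assumed and $\bar{\theta}<\theta<2s$. Hence $2^{*}_{s,\bar{\theta}}$ and $2^{*}_{s,2\theta-\bar{\theta}}$ are genuine critical Hardy--Sobolev exponents, and for $u\in D^{s,2}(\mathbb{R}^{N})$ all three integrals appearing in the statement are finite by the Hardy--Sobolev inequality.

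The computational core is the pointwise identity
\begin{equation*}
\begin{aligned}
\frac{|u|^{2^{*}_{s,\theta}}}{|x|^{\theta}}
=
\left(
\frac{|u|^{2^{*}_{s,\bar{\theta}}}}{|x|^{\bar{\theta}}}
\right)^{\frac{1}{2}}
\left(
\frac{|u|^{2^{*}_{s,2\theta-\bar{\theta}}}}{|x|^{2\theta-\bar{\theta}}}
\right)^{\frac{1}{2}},
\end{aligned}
\end{equation*}
which I would verify by balancing exponents: the powers of $|x|^{-1}$ satisfy $\frac{1}{2}\bar{\theta}+\frac{1}{2}(2\theta-\bar{\theta})=\theta$, and the powers of $|u|$ satisfy $\frac{1}{2}\cdot\frac{2(N-\bar{\theta})}{N-2s}+\frac{1}{2}\cdot\frac{2(N-2\theta+\bar{\theta})}{N-2s}=\frac{2N-2\theta}{N-2s}=2^{*}_{s,\theta}$.

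Integrating this identity over $\mathbb{R}^{N}$ and applying the Cauchy--Schwarz inequality to the resulting product immediately yields the claimed bound, valid for every $u\in D^{s,2}(\mathbb{R}^{N})$. There is essentially no obstacle here; the only points demanding care are the exponent bookkeeping above and the observation that the constraint $2\theta-\bar{\theta}<2s$ is exactly what keeps the second factor in the admissible Hardy--Sobolev range, so that its integral is finite.
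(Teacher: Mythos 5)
Your proof is correct and coincides with the paper's argument: both split the integrand as the product of the square roots of the two right-hand integrands and apply the Cauchy--Schwarz (H\"older with exponents $2,2$) inequality, with the same exponent bookkeeping and the same use of $2\theta-\bar{\theta}<2s$ to keep the second factor in the admissible Hardy--Sobolev range.
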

\noindent
{\bf Proof.}
For any
$u\in D^{s,2}(\mathbb{R}^{N})$.
By using H\"{o}lder inequality and $0<\bar{\theta}<\theta<2s$,
we obtain
\begin{equation*}
\begin{aligned}
\int_{\mathbb{R}^{N}}
\frac{|u|^{2^{*}_{s,\theta}}}
{|x|^{\theta}}
\mathrm{d}x
=&
\int_{\mathbb{R}^{N}}
\frac{|u|^{\frac{N-\bar{\theta}}{N-2s}}}
{|x|^{\frac{\bar{\theta}}{2}}}
\cdot
\frac{|u|^{\frac{N-(2\theta-\bar{\theta})}{N-2s}}}
{|x|^{\theta-\frac{\bar{\theta}}{2}}}
\mathrm{d}x\\
\leqslant&
\left(
\int_{\mathbb{R}^{N}}
\frac{|u|^{\frac{2(N-\bar{\theta})}{N-2s}}}
{|x|^{\bar{\theta}}}
\mathrm{d}x
\right)
^{\frac{1}{2}}
\left(
\int_{\mathbb{R}^{N}}
\frac{|u|^{\frac{2[N-(2\theta-\bar{\theta})]}{N-2s}}}
{|x|^{2\theta-\bar{\theta}}}
\mathrm{d}x
\right)
^{\frac{1}{2}}.
\end{aligned}
\end{equation*}
Since
$0<2\theta-\bar{\theta}<2s$,
we get
\begin{equation*}
\begin{aligned}
\int_{\mathbb{R}^{N}}
\frac{|u|^{2^{*}_{s,\theta}}}
{|x|^{\theta}}
\mathrm{d}x
\leqslant
\left(
\int_{\mathbb{R}^{N}}
\frac{|u|^{2^{*}_{s,\bar{\theta}}}}
{|x|^{\bar{\theta}}}
\mathrm{d}x
\right)
^{\frac{1}{2}}
\left(
\int_{\mathbb{R}^{N}}
\frac{|u|^{2^{*}_{s,2\theta-\bar{\theta}}}}
{|x|^{2\theta-\bar{\theta}}}
\mathrm{d}x
\right)
^{\frac{1}{2}}.
\end{aligned}
\end{equation*}
\qed
\section{The proof of theorem \ref{theorem1}}
In this section, we show the existence of nonnegative solution of problems $(\mathcal{P})$.
We prove some properties of the Nehari manifold associated with
problem $(\mathcal{P})$.
\begin{lemma}\label{lemma5}
Assume that the assumptions of Theorem
\ref{theorem1}
hold.
Then
$$c_{0}=\inf_{u\in\mathcal{N}}I(u)>0.$$
\end{lemma}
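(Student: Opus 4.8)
The plan is to show that the infimum of $I$ over the Nehari manifold $\mathcal{N}$ is strictly positive by obtaining a uniform lower bound on $\|u\|_\zeta$ for every $u\in\mathcal{N}$, and then translating that into a lower bound for $I(u)$. First I would record that on $\mathcal{N}$ one has $\langle I'(u),u\rangle=0$, i.e.
\begin{equation*}
\|u\|_\zeta^2=\sum_{i=1}^{k}\int_{\mathbb{R}^N}\frac{|u|^{2^*_{s,\theta_i}}}{|x|^{\theta_i}}\,\mathrm{d}x.
\end{equation*}
Using the definition of $H_{\zeta,\theta_i}$ in \eqref{11} together with the norm equivalence $(1-\zeta/\Lambda)\|u\|_D^2\leqslant\|u\|_\zeta^2\leqslant\|u\|_D^2$, each term on the right is bounded above by $H_{\zeta,\theta_i}^{-2^*_{s,\theta_i}/2}\,\|u\|_\zeta^{2^*_{s,\theta_i}}$. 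Hence
\begin{equation*}
\|u\|_\zeta^2\leqslant\sum_{i=1}^{k}H_{\zeta,\theta_i}^{-2^*_{s,\theta_i}/2}\,\|u\|_\zeta^{2^*_{s,\theta_i}}.
\end{equation*}
Since each exponent satisfies $2^*_{s,\theta_i}=\tfrac{2(N-\theta_i)}{N-2s}>2$ (because $\theta_i<2s$), dividing by $\|u\|_\zeta^2$ gives $1\leqslant\sum_{i=1}^{k}H_{\zeta,\theta_i}^{-2^*_{s,\theta_i}/2}\|u\|_\zeta^{2^*_{s,\theta_i}-2}$, which forces $\|u\|_\zeta\geqslant\rho_0$ for some constant $\rho_0=\rho_0(N,s,\zeta,\{\theta_i\})>0$; indeed if $\|u\|_\zeta$ were small every term on the right would be small and the inequality would fail.

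Next I would estimate $I(u)$ from below on $\mathcal{N}$. Writing $\mu=\min_i 2^*_{s,\theta_i}>2$ and using the Nehari identity to substitute for the nonlinear terms,
\begin{equation*}
I(u)=I(u)-\frac{1}{\mu}\langle I'(u),u\rangle
=\left(\frac12-\frac1\mu\right)\|u\|_\zeta^2+\sum_{i=1}^{k}\left(\frac1\mu-\frac{1}{2^*_{s,\theta_i}}\right)\int_{\mathbb{R}^N}\frac{|u|^{2^*_{s,\theta_i}}}{|x|^{\theta_i}}\,\mathrm{d}x.
\end{equation*}
Each coefficient $\tfrac1\mu-\tfrac{1}{2^*_{s,\theta_i}}$ is nonnegative by the choice of $\mu$, and $\tfrac12-\tfrac1\mu>0$, so all terms are nonnegative and
\begin{equation*}
I(u)\geqslant\left(\frac12-\frac1\mu\right)\|u\|_\zeta^2\geqslant\left(\frac12-\frac1\mu\right)\rho_0^2>0.
\end{equation*}
Taking the infimum over $u\in\mathcal{N}$ yields $c_0\geqslant(\tfrac12-\tfrac1\mu)\rho_0^2>0$, which is the claim. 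One should also note in passing that $\mathcal{N}\neq\varnothing$: for any fixed $u\neq0$ the function $t\mapsto\langle I'(tu),tu\rangle=t^2\|u\|_\zeta^2-\sum_i t^{2^*_{s,\theta_i}}\int|u|^{2^*_{s,\theta_i}}/|x|^{\theta_i}$ is positive for small $t>0$ and tends to $-\infty$ as $t\to\infty$ (again since all exponents exceed $2$), so it has a positive zero and the projection $tu$ lies on $\mathcal{N}$.

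The only genuinely delicate point is the uniform lower bound $\|u\|_\zeta\geqslant\rho_0$, and the subtlety there is purely that there are $k$ distinct critical terms rather than one: one must check that a finite sum of the form $\sum_i a_i\,t^{p_i-2}$ with $a_i>0$ and $p_i>2$ cannot equal or exceed $1$ for arbitrarily small $t$, which is immediate since the sum tends to $0$ as $t\to0^+$. No compactness, no concentration compactness, and none of the refined inequalities of Lemma \ref{lemma2}–Lemma \ref{lemma4} are needed here — those will enter later when analysing Palais–Smale sequences; the present lemma is a soft consequence of the Hardy inequality, the definition of $H_{\zeta,\theta_i}$, and the strict superquadraticity of all the nonlinearities.
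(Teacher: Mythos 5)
Your proposal is correct and follows essentially the same route as the paper: the Nehari identity plus the definition of $H_{\zeta,\theta_i}$ gives $\|u\|_{\zeta}^{2}\leqslant\sum_{i}H_{\zeta,\theta_i}^{-2^{*}_{s,\theta_i}/2}\|u\|_{\zeta}^{2^{*}_{s,\theta_i}}$, whence a uniform lower bound on $\|u\|_{\zeta}$ (the paper makes this explicit via the two cases $\|u\|_{\zeta}\geqslant 1$ and $\|u\|_{\zeta}<1$, where your limiting argument is an equivalent, slightly cleaner phrasing), and then $I(u)\geqslant(\tfrac12-\tfrac{1}{2^{*}_{s,\theta_k}})\|u\|_{\zeta}^{2}$ with $2^{*}_{s,\theta_k}=\min_i 2^{*}_{s,\theta_i}$ exactly as in the paper's Step 2.
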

\noindent
{\bf Proof.}
We divide our proof into two steps.

\noindent
{\bf Step 1.}
We claim that
any limit point of a sequence in
$\mathcal{N}$
is different from zero.
According to
$\langle I^{'}(u),u\rangle=0$
and
(\ref{11}),
for any $u\in\mathcal{N}$,
we obtain
\begin{equation*}
\begin{aligned}
0
=
\langle I^{'}(u),u\rangle
\geqslant&
\|u\|^{2}_{\zeta}
-
\sum_{i=1}^{k}
\frac{1}{H_{\zeta,\theta_{i}}^{\frac{2^{*}_{s,\theta_{i}} }{2}}}
\|u\|_{\zeta}^{2^{*}_{s,\theta_{i}} }.
\end{aligned}
\end{equation*}
From above expression,
we have
\begin{equation}\label{13}
\begin{aligned}
\|u\|^{2}_{\zeta}
\leqslant
\sum_{i=1}^{k}
\frac{1}{H_{\zeta,\theta_{i}}^{\frac{2^{*}_{s,\theta_{i}} }{2}}}
\|u\|_{\zeta}^{2^{*}_{s,\theta_{i}} }.
\end{aligned}
\end{equation}
Set
$$\kappa
:=
\sum_{i=1}^{k}
\frac{1}{H_{\zeta,\theta_{i}}^{\frac{2^{*}_{s,\theta_{i}} }{2}}}
.$$
Applying
(\ref{11}),
we get
$$
0<\kappa<\infty.$$
From
($H_{1}$),
we know
$$2<2^{*}_{s,\theta_{k}}<\cdots<2^{*}_{s,\theta_{1}}<2^{*}_{s}.$$
Now the proof of Step 1 is divided into two cases:
(i)
$\|u\|_{\zeta}\geqslant1$;
(ii)
$\|u\|_{\zeta}<1$.

\noindent
{\bf Case (i)}$\|u\|_{\zeta}\geqslant1$.
From
(\ref{13}),
we have
\begin{equation*}
\begin{aligned}
\|u\|_{\zeta}^{2}
\leqslant&
\kappa
\|u\|_{\zeta}^{2^{*}_{s,\theta_{1}}},
\end{aligned}
\end{equation*}
which implies that
\begin{equation}\label{14}
\begin{aligned}
\|u\|_{\zeta}
\geqslant
\kappa
^{\frac{1}{2-2^{*}_{s,\theta_{1}}}}.
\end{aligned}
\end{equation}
\noindent
{\bf Case (ii)}$\|u\|_{\zeta}<1$.
From
(\ref{13}),
we know
\begin{equation}\label{15}
\begin{aligned}
\|u\|_{\zeta}
\geqslant
\kappa
^{\frac{1}{2-2^{*}_{s,\theta_{k}} }}.
\end{aligned}
\end{equation}
Combining
(\ref{14})
and
(\ref{15}),
we deduce that
\begin{equation}\label{16}
\begin{aligned}
\|u\|_{\zeta}
\geqslant
\begin{cases}
\kappa
^{\frac{1}{2-2^{*}_{s,\theta_{1}} }},
&\kappa<1,\\
\kappa
^{\frac{1}{2-2^{*}_{s,\theta_{k}} }},
&\kappa\geqslant1.
\end{cases}
\end{aligned}
\end{equation}
Hence,
we know that
any limit point of a sequence in $\mathcal{N}$ is different from zero.

\noindent
{\bf Step 2.}
Now,
we claim that
$I$
is bounded from below on
$\mathcal{N}$.
For any
$u\in \mathcal{N}$,
by using
(\ref{16}),
we get
\begin{equation*}
\begin{aligned}
I(u)
\geqslant&
\left(
\frac{1}{2}
-
\frac{1}{2^{*}_{s,\theta_{k}}}
\right)
\|u\|^{2}_{\zeta}
\geqslant
\begin{cases}
\left(
\frac{1}{2}
-
\frac{1}{2^{*}_{s,\theta_{k}}}
\right)
\kappa
^{\frac{2}{2-2^{*}_{s,\theta_{1}} }},
&\kappa\leqslant1,\\
\left(
\frac{1}{2}
-
\frac{1}{2^{*}_{s,\theta_{k}}}
\right)
\kappa
^{\frac{2}{2-2^{*}_{s,\theta_{k}}}},
&\kappa>1.
\end{cases}
\end{aligned}
\end{equation*}
Therefore,
$I$
is bounded from below on
$\mathcal{N}$,
and
$c_{0}>0$.
\qed
\begin{lemma}\label{lemma6}
Assume that the assumptions of Theorem
\ref{theorem1}
hold.
Then

\noindent
(i)
for each
$u\in D^{s,2}(\mathbb{R}^{N})\setminus\{0\}$,
there exists a unique
$t_{u}>0$
such that
$t_{u}u\in \mathcal{N}$;

\noindent
(ii)
$c_{0}=c_{1}=c>0$.
\end{lemma}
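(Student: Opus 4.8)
The plan is to study, for fixed $u\in D^{s,2}(\mathbb{R}^{N})\setminus\{0\}$, the fibering map $t\mapsto I(tu)$ on $[0,\infty)$. Setting $A_{i}(u)=\int_{\mathbb{R}^{N}}|u|^{2^{*}_{s,\theta_{i}}}/|x|^{\theta_{i}}\,\mathrm{d}x>0$, one computes $\frac{\mathrm{d}}{\mathrm{d}t}I(tu)=t\bigl(\|u\|_{\zeta}^{2}-h_{u}(t)\bigr)$ with $h_{u}(t)=\sum_{i=1}^{k}A_{i}(u)\,t^{2^{*}_{s,\theta_{i}}-2}$. By $(H_{1})$ every exponent satisfies $2^{*}_{s,\theta_{i}}-2=\frac{2(2s-\theta_{i})}{N-2s}>0$, so $h_{u}$ is continuous and strictly increasing on $(0,\infty)$ with $h_{u}(0^{+})=0$ and $h_{u}(+\infty)=+\infty$; hence there is a unique $t_{u}>0$ with $h_{u}(t_{u})=\|u\|_{\zeta}^{2}$. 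Since dividing $\langle I^{'}(tu),tu\rangle=0$ by $t^{2}$ shows $tu\in\mathcal{N}$ is equivalent to $h_{u}(t)=\|u\|_{\zeta}^{2}$, this proves part (i). Moreover $\frac{\mathrm{d}}{\mathrm{d}t}I(tu)>0$ on $(0,t_{u})$ and $<0$ on $(t_{u},\infty)$, so $t_{u}$ is the unique maximum point of $t\mapsto I(tu)$ on $[0,\infty)$ and $\max_{t\geqslant0}I(tu)=I(t_{u}u)$.

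For part (ii), I would first show $c_{0}=c_{1}$. Given $u\neq0$, the point $t_{u}u$ lies in $\mathcal{N}$ and realizes $\max_{t\geqslant0}I(tu)$, so $\max_{t\geqslant0}I(tu)=I(t_{u}u)\geqslant c_{0}$; taking the infimum over $u$ gives $c_{1}\geqslant c_{0}$. Conversely, if $u\in\mathcal{N}$ then uniqueness in (i) forces $t_{u}=1$, whence $I(u)=\max_{t\geqslant0}I(tu)\geqslant c_{1}$, and taking the infimum over $\mathcal{N}$ gives $c_{0}\geqslant c_{1}$. Thus $c_{0}=c_{1}$, and this value is $>0$ by Lemma \ref{lemma5}.

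Next I would prove $c=c_{1}$. For the bound $c\leqslant c_{1}$: given $u\neq0$, since $I(\tau u)\to-\infty$ as $\tau\to\infty$ choose $\tau_{0}\geqslant t_{u}$ with $I(\tau_{0}u)<0$ and set $\Upsilon(t)=t\tau_{0}u\in\Gamma$; then $\max_{t\in[0,1]}I(\Upsilon(t))=\max_{\sigma\in[0,\tau_{0}]}I(\sigma u)=I(t_{u}u)$, so $c\leqslant c_{1}$ after taking the infimum over $u$. For the reverse bound $c\geqslant c_{0}$ I would show every path $\Upsilon\in\Gamma$ meets $\mathcal{N}$. From the definition of $H_{\zeta,\theta_{i}}$ one has $\langle I^{'}(u),u\rangle\geqslant\|u\|_{\zeta}^{2}\bigl(1-\sum_{i}H_{\zeta,\theta_{i}}^{-2^{*}_{s,\theta_{i}}/2}\|u\|_{\zeta}^{2^{*}_{s,\theta_{i}}-2}\bigr)$, so there is $\rho>0$ with $\langle I^{'}(u),u\rangle>0$ for $0<\|u\|_{\zeta}\leqslant\rho$. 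Consequently the set $\mathcal{O}=\{0\}\cup\{u:\langle I^{'}(u),u\rangle>0\}$ contains the ball $\{\|u\|_{\zeta}<\rho\}$ and, being a union of two open sets, is open; its boundary is contained in $\mathcal{N}$ (a boundary point is nonzero since $0$ is interior, and has $\langle I^{'}(v),v\rangle=0$ by continuity). Since $I(\Upsilon(1))<0$ forces $\Upsilon(1)\neq0$ and, by the fibering analysis of (i), $\langle I^{'}(\Upsilon(1)),\Upsilon(1)\rangle<0$, we get $\Upsilon(1)\notin\mathcal{O}$; as $\Upsilon(0)=0\in\mathcal{O}$ is open, the first exit time $t^{\ast}=\inf\{t:\Upsilon(t)\notin\mathcal{O}\}$ satisfies $\Upsilon(t^{\ast})\in\partial\mathcal{O}\subseteq\mathcal{N}$, so $\max_{t\in[0,1]}I(\Upsilon(t))\geqslant I(\Upsilon(t^{\ast}))\geqslant c_{0}$. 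Taking the infimum over $\Gamma$ gives $c\geqslant c_{0}$, and combining everything, $c_{0}=c_{1}=c>0$.

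The main obstacle is the lower bound $c\geqslant c_{0}$: one must guarantee that the exit point $\Upsilon(t^{\ast})$ is a genuine element of $\mathcal{N}$ rather than the origin, which is precisely why the quantitative positivity of $\langle I^{'}(u),u\rangle$ near $0$ — making $0$ an interior point of $\mathcal{O}$ and pushing $\partial\mathcal{O}$ into $\mathcal{N}$ — is indispensable. Everything else reduces to the elementary monotonicity of $h_{u}$ from part (i) together with the Hardy–Sobolev estimate via $H_{\zeta,\theta_{i}}$.
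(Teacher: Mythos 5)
Your proof is correct and is exactly the standard fibering--map/Nehari--manifold argument that the paper invokes only by reference to Theorems 4.1 and 4.2 of Willem without writing it out: part (i), the identities $c_{0}=c_{1}$ and $c\leqslant c_{1}$, and the path--intersection argument giving $c\geqslant c_{0}$ are all sound. One cosmetic point: $\{0\}$ is not itself open, so justify the openness of $\mathcal{O}$ by writing it as $\{u:\|u\|_{\zeta}<\rho\}\cup\{u:\langle I^{'}(u),u\rangle>0\}$, which is precisely what your quantitative estimate with $\rho$ delivers.
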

\noindent
{\bf Proof.}
The proof is standard, so we sketch it. Further details can be derived as in the proofs of Theorem 4.1 and 4.2 in
\cite{Willem1996}.
We omit it.
\qed

We show that the functional $I$ satisfies the Mountain--Pass geometry, and
estimate the Mountain--Pass levels.
\begin{lemma}\label{lemma7}
Assume that the assumptions of Theorem
\ref{theorem1}
hold.
Then
there exists a
$(PS)_{c}$
sequence of
$I$
at level
$c$,
where
$$0<c<c^{*}=
\min
\left\{
\frac{2s-\theta_{1}}{2(N-\theta_{1})}
H_{\zeta,\theta_{1}} ^{\frac{N-\theta_{1}}{2s-\theta_{1}}}
,
\ldots
,
\frac{2s-\theta_{k}}{2(N-\theta_{k})}
H_{\zeta,\theta_{k}} ^{\frac{N-\theta_{k}}{2s-\theta_{k}}}
\right\}.$$
\end{lemma}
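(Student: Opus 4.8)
The plan is to obtain the Palais--Smale sequence via the standard Mountain Pass Theorem, so the work splits into two parts: verifying the Mountain--Pass geometry of $I$ on $D^{s,2}(\mathbb{R}^{N})$, and showing that the resulting min-max level $c$ lies strictly below $c^{*}$. For the geometry, near the origin I would use \eqref{11} to bound each term $\int_{\mathbb{R}^{N}}|u|^{2^{*}_{s,\theta_{i}}}/|x|^{\theta_{i}}\,\mathrm{d}x\leqslant H_{\zeta,\theta_{i}}^{-2^{*}_{s,\theta_{i}}/2}\|u\|_{\zeta}^{2^{*}_{s,\theta_{i}}}$, so that
\begin{equation*}
I(u)\geqslant \tfrac{1}{2}\|u\|_{\zeta}^{2}-\sum_{i=1}^{k}\tfrac{1}{2^{*}_{s,\theta_{i}}}H_{\zeta,\theta_{i}}^{-2^{*}_{s,\theta_{i}}/2}\|u\|_{\zeta}^{2^{*}_{s,\theta_{i}}};
\end{equation*}
since every $2^{*}_{s,\theta_{i}}>2$, for $\|u\|_{\zeta}=\rho$ small enough the right side is bounded below by a positive constant $\alpha$, giving $I(u)\geqslant\alpha>0$ on the sphere $\|u\|_{\zeta}=\rho$. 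For the far end, fix any $w\in D^{s,2}(\mathbb{R}^{N})\setminus\{0\}$; then $I(tw)=\tfrac{t^{2}}{2}\|w\|_{\zeta}^{2}-\sum_{i}\tfrac{t^{2^{*}_{s,\theta_{i}}}}{2^{*}_{s,\theta_{i}}}\int|w|^{2^{*}_{s,\theta_{i}}}/|x|^{\theta_{i}}\to-\infty$ as $t\to\infty$, so some $e=t_{0}w$ with $\|e\|_{\zeta}>\rho$ has $I(e)<0$. The Mountain Pass Theorem of Ambrosetti--Rabinowitz then produces a $(PS)_{c}$ sequence at the level $c=\inf_{\Upsilon\in\Gamma}\max_{t\in[0,1]}I(\Upsilon(t))\geqslant\alpha>0$, which by Lemma~\ref{lemma6}(ii) equals $c_{0}=c_{1}$.

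The substantive part is the strict upper bound $c<c^{*}$, and here I would use the characterization $c=c_{1}=\inf_{u\in D^{s,2}(\mathbb{R}^{N})\setminus\{0\}}\max_{t\geqslant 0}I(tu)$ from Lemma~\ref{lemma6}. It suffices to exhibit a single $u$ for which $\max_{t\geqslant 0}I(tu)<c^{*}$. The natural choice is an extremal (or near-extremal) function $U$ for the best constant $H_{\zeta,\theta_{j}}$ for whichever index $j$ realizes the minimum defining $c^{*}$; such $U$ exists by \eqref{11} (attainment, from \cite{Ghoussoub2016}). Along $t\mapsto I(tU)$ one computes $g(t):=\tfrac{t^{2}}{2}\|U\|_{\zeta}^{2}-\sum_{i}\tfrac{t^{2^{*}_{s,\theta_{i}}}}{2^{*}_{s,\theta_{i}}}\int|U|^{2^{*}_{s,\theta_{i}}}/|x|^{\theta_{i}}\,\mathrm{d}x$. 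Dropping all terms $i\neq j$ (they are nonnegative and only decrease $g$) bounds $g(t)$ above by $h(t):=\tfrac{t^{2}}{2}\|U\|_{\zeta}^{2}-\tfrac{t^{2^{*}_{s,\theta_{j}}}}{2^{*}_{s,\theta_{j}}}\int|U|^{2^{*}_{s,\theta_{j}}}/|x|^{\theta_{j}}\,\mathrm{d}x$, whose maximum over $t\geqslant 0$ is exactly $\bigl(\tfrac{1}{2}-\tfrac{1}{2^{*}_{s,\theta_{j}}}\bigr)\bigl(\|U\|_{\zeta}^{2}/(\int|U|^{2^{*}_{s,\theta_{j}}}/|x|^{\theta_{j}})^{2/2^{*}_{s,\theta_{j}}}\bigr)^{2^{*}_{s,\theta_{j}}/(2^{*}_{s,\theta_{j}}-2)}=\tfrac{2s-\theta_{j}}{2(N-\theta_{j})}H_{\zeta,\theta_{j}}^{(N-\theta_{j})/(2s-\theta_{j})}=c^{*}$. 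So far this gives only $\max_{t}I(tU)\leqslant c^{*}$; to get the strict inequality I would observe that, because at least one other exponent is present (indeed $k>2$), the discarded terms with $i\neq j$ are strictly positive at the maximizing $t$, so the inequality $g(t)<h(t)$ is strict there, yielding $\max_{t\geqslant0}I(tU)<c^{*}$ and hence $c\leqslant c_{1}\leqslant\max_{t\geqslant0}I(tU)<c^{*}$.

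The main obstacle I anticipate is making that last strict-inequality argument fully rigorous: one must be careful that the extremal $U$ genuinely belongs to $D^{s,2}(\mathbb{R}^{N})$ and that all the integrals $\int|U|^{2^{*}_{s,\theta_{i}}}/|x|^{\theta_{i}}\,\mathrm{d}x$ are finite and positive for every $i$, which uses that $U$ is a known Aubin--Talenti-type function with prescribed decay (from \cite{Ghoussoub2016}) together with the Hardy--Sobolev embeddings; the extra hypotheses in $(H_1)$ on the $\theta_i$ are exactly what keep all these exponents in the admissible range $(0,2s)$. With that in hand, the elementary one-variable maximization $\max_{t\geqslant 0}h(t)$ and the sign of the correction term finish the estimate. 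Everything else — the fibering map being well-defined, the geometry constants — is routine and parallels \cite{Willem1996} and \cite{Ghoussoub2016}.
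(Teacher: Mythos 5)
Your argument is correct and is essentially the standard one the paper itself defers to (the paper omits the proof entirely, citing Theorem 2 of \cite{Pucci2009}): mountain--pass geometry via the constants $H_{\zeta,\theta_{i}}$, then testing the min--max level with an extremal $U$ of $H_{\zeta,\theta_{j}}$ for the index $j$ realizing $c^{*}$, where the one--variable maximization gives $\max_{t\geqslant0}h(t)=c^{*}$ and the strict positivity of the discarded terms at the fibering maximum $t_{0}>0$ yields $c\leqslant\max_{t\geqslant0}I(tU)=g(t_{0})<h(t_{0})\leqslant c^{*}$. No gaps; the attainment of $H_{\zeta,\theta_{j}}$ and the finiteness of all the integrals $\int_{\mathbb{R}^{N}}|U|^{2^{*}_{s,\theta_{i}}}|x|^{-\theta_{i}}\,\mathrm{d}x$ follow from \eqref{11} exactly as you indicate.
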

\noindent
{\bf Proof.}
The proof is standard, so we sketch it. Further details can be derived as in the proofs of Theorem 2 in \cite{Pucci2009}, we omit it.
\qed

The following result implies the non--vanishing of
$(PS)_{c}$
sequence.
\begin{lemma}\label{lemma8}
Assume that the assumptions of Theorem
\ref{theorem1}
hold.
Let
$\{u_{n}\}$
be a
$(PS)_{c}$
sequence of
$I$
with
$c\in(0,c^{*})$,
then
\begin{equation*}
\begin{aligned}
\lim_{n\rightarrow\infty}
\int_{\mathbb{R}^{N}}
\frac{|u_{n}|^{2^{*}_{s,\theta_{i}}}}
{|x|^{\theta_{i}}}
\mathrm{d}x
>0,(i=1,\ldots,k).
\end{aligned}
\end{equation*}
\end{lemma}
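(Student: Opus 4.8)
The plan is to argue by contradiction: suppose that for some index $j\in\{1,\dots,k\}$ we have $\int_{\mathbb{R}^N}|u_n|^{2^*_{s,\theta_j}}|x|^{-\theta_j}\,\mathrm{d}x\to 0$ along a subsequence. The first step is to record the standard consequences of the $(PS)_c$ condition: $\{u_n\}$ is bounded in $D^{s,2}(\mathbb{R}^N)$ (because $I(u_n)\to c$ and $\langle I'(u_n),u_n\rangle\to 0$ force $(\tfrac12-\tfrac1{2^*_{s,\theta_k}})\|u_n\|_\zeta^2\le c+o(1)+o(\|u_n\|_\zeta)$), and hence, after passing to a subsequence, $\|u_n\|_\zeta^2\to L$ and each $\int_{\mathbb{R}^N}|u_n|^{2^*_{s,\theta_i}}|x|^{-\theta_i}\,\mathrm{d}x\to \ell_i\ge 0$ for some limits $L,\ell_i$. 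From $\langle I'(u_n),u_n\rangle\to 0$ we get $L=\sum_{i=1}^k \ell_i$, and from $I(u_n)\to c$ we get $c=\tfrac12 L-\sum_{i=1}^k\tfrac1{2^*_{s,\theta_i}}\ell_i$.

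The second step is the crucial one: I must propagate the vanishing of the $j$-th integral to all the others, using the interpolation inequalities of Lemma \ref{lemma3} and Lemma \ref{lemma4} together with the refined Hardy--Sobolev inequality of Lemma \ref{lemma2}. Concretely, Lemma \ref{lemma3} gives, for $\theta<\tilde\theta$,
\begin{equation*}
\int_{\mathbb{R}^N}\frac{|u_n|^{2^*_{s,\theta}}}{|x|^{\theta}}\,\mathrm{d}x
\le\left(\int_{\mathbb{R}^N}\frac{|u_n|^{2^*_{s,\tilde\theta}}}{|x|^{\tilde\theta}}\,\mathrm{d}x\right)^{\theta/\tilde\theta}
\left(\int_{\mathbb{R}^N}|u_n|^{2^*_s}\,\mathrm{d}x\right)^{(\tilde\theta-\theta)/\tilde\theta},
\end{equation*}
and Lemma \ref{lemma4} gives, for $\bar\theta<\theta$ with $2\theta-\bar\theta<2s$,
\begin{equation*}
\int_{\mathbb{R}^N}\frac{|u_n|^{2^*_{s,\theta}}}{|x|^{\theta}}\,\mathrm{d}x
\le\left(\int_{\mathbb{R}^N}\frac{|u_n|^{2^*_{s,\bar\theta}}}{|x|^{\bar\theta}}\,\mathrm{d}x\right)^{1/2}
\left(\int_{\mathbb{R}^N}\frac{|u_n|^{2^*_{s,2\theta-\bar\theta}}}{|x|^{2\theta-\bar\theta}}\,\mathrm{d}x\right)^{1/2}.
\end{equation*}
Since $\{u_n\}$ is bounded, $\int|u_n|^{2^*_s}\,\mathrm{d}x$ is bounded by the Sobolev inequality. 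If $j\ge 2$, then $\theta_{j-1}<\theta_j$, and I use Lemma \ref{lemma4} with $\theta=\theta_{j-1}$, $\bar\theta=2\theta_{j-1}-\theta_j$ (note $0<2\theta_{j-1}-\theta_j$ need not hold in general, so instead I apply Lemma \ref{lemma4} in the form $\theta=\theta_j$, $\bar\theta=\theta_{j-1}$, which requires $2\theta_j-\theta_{j-1}<2s$ — guaranteed by $(H_1)$ since $2\theta_k-\theta_1<2s$ dominates all such combinations) to bound $\ell_j$ from below by a product involving $\ell_{j-1}$; combined with $\ell_j=0$ this forces $\ell_{j-1}=0$ (using also $\ell_{j+1}$ or the boundedness of the Sobolev integral as the second factor). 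Iterating downward and, symmetrically, using Lemma \ref{lemma3} to push upward in the index, I conclude $\ell_i=0$ for every $i$, whence $L=\sum_i\ell_i=0$.

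The third step closes the contradiction. From $L=0$ we get $c=\tfrac12 L-\sum_i\tfrac1{2^*_{s,\theta_i}}\ell_i=0$, contradicting $c>0$. Actually one must be slightly more careful: if $L=0$ then $\|u_n\|_\zeta\to 0$, so $u_n\to 0$ strongly, and $I(u_n)\to 0\ne c$, the contradiction. (Alternatively, to avoid the degenerate interpolation constants when some $\ell_i=0$, one argues directly: $\|u_n\|_\zeta^2=\sum_i\int|u_n|^{2^*_{s,\theta_i}}|x|^{-\theta_i}\mathrm{d}x+o(1)$, and by Lemma \ref{lemma3}/\ref{lemma4} each term on the right is controlled by a power strictly larger than $1$ of $\|u_n\|_\zeta^2$ times bounded quantities, once the vanishing of one term is propagated; then $\|u_n\|_\zeta^2\le C\|u_n\|_\zeta^{2+\delta}$ for some $\delta>0$ forces either $\|u_n\|_\zeta\to 0$ or $\|u_n\|_\zeta\ge$ const, and the vanishing hypothesis kills the second option.)

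The main obstacle I anticipate is \textbf{Step 2}: making the chain of interpolation inequalities actually close up. The delicate point is that Lemma \ref{lemma4} needs the auxiliary exponent $2\theta-\bar\theta$ to lie in $(0,2s)$, and this is exactly why hypothesis $(H_1)$ includes the condition $2\theta_k-\theta_1\in(0,2s)$: it is the worst case among all pairs $\theta_{i-1}<\theta_i$ (and more generally guarantees that every intermediate exponent produced in the induction stays admissible). One must verify carefully that every application of Lemma \ref{lemma3} and Lemma \ref{lemma4} encountered in the inductive propagation uses only exponents in $(0,2s)$, and that the exponents appearing as powers are all strictly between $0$ and $1$ (so that a genuine "superlinear" estimate $\|u_n\|_\zeta^2\lesssim\|u_n\|_\zeta^{2(1+\delta)}$ results), which is where the strict inequalities $2<2^*_{s,\theta_k}<\dots<2^*_{s,\theta_1}<2^*_s$ noted in Lemma \ref{lemma5} come in.
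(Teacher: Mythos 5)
Your overall strategy (argue by contradiction, propagate the vanishing of one Hardy--Sobolev integral to all the others via Lemma \ref{lemma3} and Lemma \ref{lemma4}, then conclude $c=0$, contradicting $c>0$) is exactly the paper's, and you correctly identify Step 2 as the crux. However, your execution of Step 2 assigns the two interpolation lemmas to the wrong directions, and as written the propagation does not close. Both lemmas give \emph{upper} bounds on their left-hand integral, so neither ever "bounds $\ell_j$ from below." Lemma \ref{lemma4} with $\bar\theta=\theta_{j-1}<\theta=\theta_j$ reads $\ell_j\le \ell_{j-1}^{1/2}\bigl(\int|u_n|^{2^*_{s,2\theta_j-\theta_{j-1}}}|x|^{-(2\theta_j-\theta_{j-1})}\,\mathrm{d}x\bigr)^{1/2}$, so $\ell_j=0$ yields no information about $\ell_{j-1}$; your claim that it "forces $\ell_{j-1}=0$" runs the inequality backwards. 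The same reversal occurs when you propose Lemma \ref{lemma3} "to push upward in the index": with $\theta=\theta_j<\tilde\theta=\theta_{j+1}$ it bounds $\ell_j$ by a power of $\ell_{j+1}$, which again cannot convert $\ell_j=0$ into $\ell_{j+1}=0$.

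The correct assignment, which is what the paper does (its Cases 1--3), is the opposite one. Vanishing propagates \emph{downward}, from $\theta_j$ to every $\theta_i$ with $i<j$, via Lemma \ref{lemma3} with $\tilde\theta=\theta_j$: $\ell_i\le \ell_j^{\theta_i/\theta_j}\bigl(\int|u_n|^{2^*_s}\,\mathrm{d}x\bigr)^{(\theta_j-\theta_i)/\theta_j}$, the Sobolev factor being bounded by \eqref{10}. Vanishing propagates \emph{upward}, to every $i>j$, via Lemma \ref{lemma4} with $\bar\theta=\theta_j$ and $\theta=\theta_i$: $\ell_i\le\ell_j^{1/2}\bigl(\int|u_n|^{2^*_{s,2\theta_i-\theta_j}}|x|^{-(2\theta_i-\theta_j)}\,\mathrm{d}x\bigr)^{1/2}$, where the second factor is bounded because $(H_{1})$ guarantees $0<2\theta_i-\theta_j\le 2\theta_k-\theta_1<2s$, so \eqref{11} applies together with the boundedness of $\|u_n\|_D$. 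With this swap your argument closes in a single pass (no iteration is needed), and $L=\sum_i\ell_i=0$ gives $c=0$, the desired contradiction. Your first, abandoned attempt (Lemma \ref{lemma4} with $\bar\theta=2\theta_{j-1}-\theta_j$) was actually the right direction but, as you noted, fails on positivity of $\bar\theta$; the repair is to switch to Lemma \ref{lemma3} for that direction, not to reverse the roles in Lemma \ref{lemma4}. A minor point: Lemma \ref{lemma2} is not needed here (the paper uses it only later, in Step 1 of the proof of Theorem \ref{theorem1}).
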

\noindent
{\bf Proof.}
It is easy to see that
$\{u_{n}\}$
is uniformly bounded in
$D^{s,2}(\mathbb{R}^{N})$.
The proof of this Lemma is divided into three cases:

\noindent
(1)
$
\lim_{n\rightarrow\infty}
\int_{\mathbb{R}^{N}}
\frac{|u_{n}|^{2^{*}_{s,\theta_{1}}}}
{|x|^{\theta_{1}}}
\mathrm{d}x
>0
$;

\noindent
(2)
$
\lim_{n\rightarrow\infty}
\int_{\mathbb{R}^{N}}
\frac{|u_{n}|^{2^{*}_{s,\theta_{k}}}}
{|x|^{\theta_{k}}}
\mathrm{d}x
>0
$;

\noindent
(3)
$
\lim_{n\rightarrow\infty}
\int_{\mathbb{R}^{N}}
\frac{|u_{n}|^{2^{*}_{s,\theta_{j}}}}
{|x|^{\theta_{j}}}
\mathrm{d}x
>0
$,
$(j=2,\ldots,k-1)$;

\noindent
{\bf Case 1.}
Suppose  on the contrary that
\begin{equation}\label{17}
\begin{aligned}
\lim_{n\rightarrow\infty}
\int_{\mathbb{R}^{N}}
\frac{|u_{n}|^{2^{*}_{s,\theta_{1}}}}
{|x|^{\theta_{1}}}
\mathrm{d}x
=0.
\end{aligned}
\end{equation}
From
($H_{1}$),
we know
\begin{equation}\label{18}
\begin{aligned}
0<2\theta_{2}-\theta_{1}<\cdots<2\theta_{k}-\theta_{1}<2s.
\end{aligned}
\end{equation}
Since
$\{u_{n}\}$
is uniformly bounded in
$D^{s,2}(\mathbb{R}^{N})$,
there exists a constant
$0<C<\infty$
such that
$\|u_{n}\|_{D}\leqslant C$.
Applying
(\ref{18})
and
(\ref{11}),
we obtain
\begin{equation}\label{19}
\begin{aligned}
\lim_{n\rightarrow\infty}
\int_{\mathbb{R}^{N}}
\frac{|u_{n}|^{2^{*}_{s,2\theta_{i}-\theta_{1}}}}
{|x|^{2\theta_{i}-\theta_{1}}}
\mathrm{d}x
\leqslant C,
(i=2,\ldots,k).
\end{aligned}
\end{equation}
According to
Lemma
\ref{lemma4},
(\ref{17})
and
(\ref{19}),
we obtain
\begin{equation}\label{20}
\begin{aligned}
&\lim_{n\rightarrow\infty}
\int_{\mathbb{R}^{N}}
\frac{|u_{n}|^{2^{*}_{s,\theta_{i}}}}
{|x|^{\theta_{i}}}
\mathrm{d}x\\
\leqslant&
\left(
\lim_{n\rightarrow\infty}
\int_{\mathbb{R}^{N}}
\frac{|u_{n}|^{2^{*}_{s,\theta_{1}}}}
{|x|^{\theta_{1}}}
\mathrm{d}x
\right)
^{\frac{1}{2}}
\left(
\lim_{n\rightarrow\infty}
\int_{\mathbb{R}^{N}}
\frac{|u_{n}|^{2^{*}_{s,2\theta_{i}-\theta_{1}}}}
{|x|^{2\theta_{i}-\theta_{1}}}
\mathrm{d}x
\right)
^{\frac{1}{2}}\\
=&0
~(i=2,\ldots,k).
\end{aligned}
\end{equation}
By using
(\ref{17}),
(\ref{20})
and the definition of
$(PS)_{c}$
sequence,
we obtain
$$
c
+
o(1)
=
\frac{1}{2}
\|u_{n}\|_{\zeta}^{2}
,$$
and
$$o(1)
=
\|u_{n}\|_{\zeta}^{2}
.
$$
These yield $c=0$.
This contradicts with $c>0$.

\noindent
{\bf Case 2.}
Suppose  on the contrary that
\begin{equation}\label{21}
\begin{aligned}
\lim_{n\rightarrow\infty}
\int_{\mathbb{R}^{N}}
\frac{|u_{n}|^{2^{*}_{s,\theta_{k}}}}
{|x|^{\theta_{k}}}
\mathrm{d}x
=0.
\end{aligned}
\end{equation}
By using
(\ref{10})
and
$\|u_{n}\|_{D}\leqslant C$,
we have
\begin{equation}\label{22}
\begin{aligned}
\lim_{n\rightarrow\infty}
\int_{\mathbb{R}^{N}}
|u_{n}|^{2^{*}_{s}}
\mathrm{d}x
\leqslant C.
\end{aligned}
\end{equation}
Applying
($H_{1}$),
Lemma
\ref{lemma3},
(\ref{21})
and
(\ref{22}),
we obtain
\begin{equation}\label{23}
\begin{aligned}
&
\lim_{n\rightarrow\infty}
\int_{\mathbb{R}^{N}}
\frac{|u_{n}|^{2^{*}_{s,\theta_{i}}}}
{|x|^{\theta_{i}}}
\mathrm{d}x\\
\leqslant&
\left(
\lim_{n\rightarrow\infty}
\int_{\mathbb{R}^{N}}
\frac{|u_{n}|^{2^{*}_{s,\theta_{k}}}}
{|x|^{\theta_{k}}}
\mathrm{d}x
\right)
^{\frac{\theta_{i}}{\theta_{k}}}
\left(
\lim_{n\rightarrow\infty}
\int_{\mathbb{R}^{N}}
|u_{n}|^{2^{*}_{s}}
\mathrm{d}x
\right)
^{\frac{\theta_{k}-\theta_{i}}{\theta_{k}}}\\
=&0
~(i=1,\ldots,k-1).
\end{aligned}
\end{equation}
By using
(\ref{21}),
(\ref{23})
and the definition of
$(PS)_{c}$
sequence,
similar to Case 1,
we get
$
c=0.
$
This is a contradiction.

\noindent
{\bf Case 3.}
Set
$j\in [2,k-1]$.
Suppose on the contrary that
\begin{equation}\label{24}
\begin{aligned}
\lim_{n\rightarrow\infty}
\int_{\mathbb{R}^{N}}
\frac{|u_{n}|^{2^{*}_{s,\theta_{j}}}}
{|x|^{\theta_{j}}}
\mathrm{d}x
=0.
\end{aligned}
\end{equation}
From
($H_{1}$),
we know
\begin{equation*}
\begin{aligned}
0<2\theta_{j+1}-\theta_{j}<\cdots<2\theta_{k}-\theta_{j}<2\theta_{k}-\theta_{1}<2s.
\end{aligned}
\end{equation*}
Similar to
\eqref{20},
we obtain
\begin{equation*}
\begin{aligned}
\lim_{n\rightarrow\infty}
\int_{\mathbb{R}^{N}}
\frac{|u_{n}|^{2^{*}_{s,\theta_{i}}}}
{|x|^{\theta_{i}}}
\mathrm{d}x
=0
~(i=j+1,\ldots,k).
\end{aligned}
\end{equation*}
Similar to \eqref{23},
we have
\begin{equation*}
\begin{aligned}
\lim_{n\rightarrow\infty}
\int_{\mathbb{R}^{N}}
\frac{|u_{n}|^{2^{*}_{s,\theta_{i}}}}
{|x|^{\theta_{i}}}
\mathrm{d}x
=&0
~(i=1,\ldots,j-1).
\end{aligned}
\end{equation*}
Similar to Case 1,
we get
$
c=0.
$
This is a contradiction.
\qed

\noindent
{\bf The proof of Theorem \ref{theorem1}:}
We divide our proof into five steps.

\noindent
{\bf Step 1.}
Since
$\{u_{n}\}$
is a bounded sequence in
$D^{s,2}(\mathbb{R}^{N})$,
up to a subsequence,
we assume that
\begin{align*}
&
u_{n}\rightharpoonup u,
~
\mathrm{in}
~
D^{s,2}(\mathbb{R}^{N}),
~~
u_{n}\rightarrow u,
~
\mathrm{a.e. ~in}
~
\mathbb{R}^{N},\\
&u_{n}\rightarrow u,
~
\mathrm{in}
~
L^{r}_{loc}(\mathbb{R}^{N})
~
\mathrm{for~all}
~
r\in[1,2^{*}_{s}).
\end{align*}
According to
Lemma
\ref{lemma2},
and
Lemma
\ref{lemma8},
there exists
$C>0$
such that
$$
\|u_{n}\|_{\mathcal{L}^{2,N-2s}(\mathbb{R}^{N})}\geqslant C>0.
$$
On the other hand,
since the sequence is bounded in
$D^{s,2}(\mathbb{R}^{N})$
and
$D^{s,2}(\mathbb{R}^{N})\hookrightarrow L^{2^{*}_{s}}(\mathbb{R}^{N})\hookrightarrow \mathcal{L}^{2,N-2s}(\mathbb{R}^{N})$,
we have
$$
\|u_{n}\|_{\mathcal{L}^{2,N-2s}(\mathbb{R}^{N})}\leqslant C,
$$
for some
$C>0$
independent of $n$.
Hence, there exists a positive constant which we denote again by $C$ such
that for any $n$ we obtain
$$
C
\leqslant
\|u_{n}\|_{\mathcal{L}^{2,N-2s}(\mathbb{R}^{N})}\leqslant C^{-1}.
$$
So we may find
$\sigma_{n} > 0$
and
$x_{n}\in \mathbb{R}^{N}$
such that
$$
\frac{1}{\sigma_{n}^{2s}}
\int_{B(x_{n},\sigma_{n})}
|u_{n}(y)|^{2}
\mathrm{d}y
\geqslant
\|u_{n}\|_{\mathcal{L}^{2,N-2s}(\mathbb{R}^{N})}^{2}
-
\frac{C}{2n}
\geqslant
C_{3}>0.
$$
Let
$\bar{u}_{n}(x)=\sigma_{n}^{\frac{N-2s}{2}}u_{n}(x_{n}+\sigma_{n}x)$.
We may readily verify that
$$\tilde{I}(\bar{u}_{n})
=
I(u_{n})\rightarrow c
~\mathrm{and}~
\tilde{I}^{'}(\bar{u}_{n})
\rightarrow0
~\mathrm{as}~n\rightarrow\infty,$$
where
\begin{equation*}
\begin{aligned}
\tilde{I}(\bar{u}_{n})
=&
\frac{1}{2}
\|\bar{u}_{n}\|_{D}^{2}
-
\frac{1}{2}
\int_{\mathbb{R}^{N}}
\frac{|\bar{u}_{n}|^{2}}
{|x+\frac{x_{n}}{\sigma_{n}}|^{2s}}
\mathrm{d}x
-
\sum_{i=1}^{k}
\frac{1}{2^{*}_{s,\theta_{i}}}
\int_{\mathbb{R}^{N}}
\frac{|\bar{u}_{n}|^{2^{*}_{s,\theta_{i}}}}
{|x+\frac{x_{n}}{\sigma_{n}}|^{\theta_{i}}}
\mathrm{d}x.
\end{aligned}
\end{equation*}
Now,
for all
$\varphi\in D^{s,2}(\mathbb{R}^{N})$,
we obtain
\begin{equation*}
\begin{aligned}
|\langle \tilde{I}^{'}(\bar{u}_{n}),\varphi\rangle|
=&
|\langle I^{'}(u_{n}),\bar{\varphi}\rangle|\\
\leqslant&
\|I^{'}(u_{n})\|_{D^{-1}}
\|\bar{\varphi}\|_{D}\\
=&
o(1)
\|\bar{\varphi}\|_{D},
\end{aligned}
\end{equation*}
where
$\bar{\varphi}=\sigma_{n}^{-\frac{N-2s}{2}}\varphi(\frac{x-x_{n}}{\sigma_{n}})$.
Since
$\|\bar{\varphi}\|_{D}=\|\varphi\|_{D}$,
we get
$$\tilde{I}^{'}(\bar{u}_{n})\rightarrow0~
\mathrm{as}~
n\rightarrow\infty.$$
Thus there exists
$\bar{u}$
such that
\begin{align*}
&
\bar{u}_{n}\rightharpoonup \bar{u},
~
\mathrm{in}
~
D^{s,2}(\mathbb{R}^{N}),
~~
\bar{u}_{n}\rightarrow \bar{u},
~
\mathrm{a.e. ~in}
~
\mathbb{R}^{N},\\
&\bar{u}_{n}\rightarrow \bar{u},
~
\mathrm{in}
~
L^{r}_{loc}(\mathbb{R}^{N})
~
\mathrm{for~all}
~
r\in[1,2^{*}_{s}).
\end{align*}
Then
$$
\int_{B(0,1)}
|\bar{u}_{n}(y)|^{2}
\mathrm{d}y
=
\frac{1}{\sigma_{n}^{2s}}
\int_{B(x_{n},\sigma_{n})}
|u_{n}(y)|^{2}
\mathrm{d}y
\geqslant
C_{3}>0.$$
As a result,
$\bar{u}\not\equiv0$.

\noindent
{\bf Step 2.}
Now,
we claim that
$\{\frac{x_{n}}{\sigma_{n}}\}$
is bounded.
If
$\frac{x_{n}}{\sigma_{n}}\rightarrow\infty$,
then for any
$\varphi\in D^{s,2}(\mathbb{R}^{N})$,
we get
\begin{equation}\label{25}
\begin{aligned}
\lim_{n\rightarrow\infty}
\int_{\mathbb{R}^{N}}
\frac{\bar{u}_{n}\varphi}
{|x+\frac{x_{n}}{\sigma_{n}}|^{2s}}
\mathrm{d}x
=
0
~\mathrm{and}~
\lim_{n\rightarrow\infty}
\int_{\mathbb{R}^{N}}
\frac{|\bar{u}_{n}|^{2^{*}_{s,\theta_{i}}-2}\bar{u}_{n}\varphi}
{|x+\frac{x_{n}}{\sigma_{n}}|^{\theta_{i}}}
\mathrm{d}x
=0.
\end{aligned}
\end{equation}
Since
$\bar{u}_{n}\rightharpoonup \bar{u}$
weakly in
$D^{s,2}(\mathbb{R}^{N})$,
we know
\begin{equation}\label{26}
\begin{aligned}
&\lim_{n\rightarrow\infty}
\int_{\mathbb{R}^{N}}
\int_{\mathbb{R}^{N}}
\frac{(\bar{u}_{n}(x)-\bar{u}_{n}(y))(\varphi(x)-\varphi(y))}
{|x-y|^{N+2s}}
\mathrm{d}x
\mathrm{d}y\\
=&
\int_{\mathbb{R}^{N}}
\int_{\mathbb{R}^{N}}
\frac{(\bar{u}(x)-\bar{u}(y))(\varphi(x)-\varphi(y))}
{|x-y|^{N+2s}}
\mathrm{d}x
\mathrm{d}y.
\end{aligned}
\end{equation}
Applying
$\lim\limits_{n\rightarrow\infty}\langle \tilde{I}^{'}(\bar{u}_{n}),\varphi\rangle\rightarrow0$,
\eqref{25}
and
\eqref{26},
we have
\begin{equation*}
\begin{aligned}
\int_{\mathbb{R}^{N}}
\int_{\mathbb{R}^{N}}
\frac{(\bar{u}(x)-\bar{u}(y))(\varphi(x)-\varphi(y))}
{|x-y|^{N+2s}}
\mathrm{d}x
\mathrm{d}y
=0.
\end{aligned}
\end{equation*}
Let
$\varphi=\bar{u}$.
Then
$\|\bar{u}\|_{D}=0$,
so
$u(x)=u(y)$
a.e.
$(x,y)\in \mathbb{R}^{N}\times \mathbb{R}^{N}$,
that is 
$u=a\in\mathbb{R}$
a.e. in
$\mathbb{R}^{N}$ (see \cite[Page 27, Line 12]{BisciBook}).

If $a\not=0$, then $0<\|\bar{u}\|_{L^{2^{*}_{s}}(\mathbb{R}^{N})}^{2}\leqslant \frac{1}{S_{s}}\|\bar{u}\|_{D}^{2}=0$,
which is a contradiction.

If $a=0$,
then it contradicts with $\bar{u}\not\equiv0$.

Hence, $\{\frac{x_{n}}{\sigma_{n}}\}$ is bounded.

\noindent
{\bf Step 3.}
In this step,
we study another
$(PS)_{c}$
sequence of
$I$.
Let
$\tilde{u}_{n}(x)=\sigma_{n}^{\frac{N-2s}{2}}u_{n}(\sigma_{n}x)$.
Then we can
verify that
$$I(\tilde{u}_{n})=I(u_{n})\rightarrow c,~
I^{'}(\tilde{u}_{n})\rightarrow 0
~\mathrm{as}~n\rightarrow\infty.$$
Arguing as before, we have
\begin{align*}
&\tilde{u}_{n}\rightharpoonup \tilde{u}
~
\mathrm{in}
~
D^{1,2}(\mathbb{R}^{N}),~
\tilde{u}_{n}\rightarrow \tilde{u}
~
\mathrm{a.e. ~in}
~
\mathbb{R}^{N},\\
&\tilde{u}_{n}\rightarrow \tilde{u}
~
\mathrm{in}
~
L^{r}_{loc}(\mathbb{R}^{N})~~\mathrm{for~all~}r\in[1,2^{*}_{s}).
\end{align*}
Since  $\{\frac{x_{n}}{\sigma_{n}}\}$ is bounded,
there exists
$\tilde{R}>0$ such that
$$
\int_{B(0,\tilde{R})}
|\tilde{u}_{n}(y)|^{2}
\mathrm{d}y
>
\int_{B(\frac{x_{n}}{\sigma_{n}},1)}
|\tilde{u}_{n}(y)|^{2}
\mathrm{d}y
=
\frac{1}{\sigma_{n}^{2s}}
\int_{B(x_{n},\sigma_{n})}
|u_{n}(y)|^{2}
\mathrm{d}y
\geqslant
C_{3}>0.$$
As a result,
$\tilde{u}\not\equiv0$.

\noindent
{\bf Step 4.}
In this step,
we show
$\tilde{u}_{n}\rightarrow \tilde{u}$
strongly in
$D^{s,2}(\mathbb{R}^{N})$.
It is easy to see that
\begin{equation}\label{27}
\begin{aligned}
\langle I^{'}(\tilde{u}),\varphi\rangle=0.
\end{aligned}
\end{equation}
Combining
\eqref{27}
and
$\tilde{u}\not\equiv0$,
we get
$\tilde{u}\in \mathcal{N}$.
Set
\begin{align*}
K(u)
=
\sum_{i=1}^{k}
\left(
\frac{1}{2}
-
\frac{1}{2^{*}_{s,\theta_{i}}}
\right)
\int_{\mathbb{R}^{N}}
\frac{|u|^{2^{*}_{s,\theta_{i}}}}
{|x|^{\theta_{i}}}
\mathrm{d}x.
\end{align*}
Applying
Lemma \ref{lemma6},
Br\'{e}zis--Lieb lemma,
$\tilde{u}\in \mathcal{N}$
and
Lemma \ref{lemma5},
we obtain
\begin{equation}\label{28}
\begin{aligned}
c_{0}=
c
=&
I(\tilde{u}_{n})
-
\frac{1}{2}
\langle I^{'}(\tilde{u}_{n}),\tilde{u}_{n}\rangle\\
=&
\lim_{n\rightarrow\infty}K(\tilde{u}_{n})+o(1)\\
\geqslant&
K(\tilde{u})
+o(1)\\
=&
I(\tilde{u})
-
\frac{1}{2}
\langle I^{'}(\tilde{u}),\tilde{u}\rangle
=
I(\tilde{u})
\geqslant
c_{0}.
\end{aligned}
\end{equation}
\textcolor{red}{Therefore, the inequalities above have to be equalities}.
We know
\begin{align*}
\lim\limits_{n\rightarrow\infty}
K(\tilde{u}_{n})
=
K(\tilde{u}).
\end{align*}
By using Br\'{e}zis--Lieb lemma again,
we have
\begin{align*}
\lim\limits_{n\rightarrow\infty}
K(\tilde{u}_{n})
-
\lim\limits_{n\rightarrow\infty}
K(\tilde{u}_{n}-\tilde{u})
=
K(\tilde{u})+o(1).
\end{align*}
Hence,
we deduce that
\begin{align*}
\lim\limits_{n\rightarrow\infty}
K(\tilde{u}_{n}-\tilde{u})
=
0,
\end{align*}
which implies that
\begin{equation}\label{29}
\begin{aligned}
\lim\limits_{n\rightarrow\infty}
\int_{\mathbb{R}^{N}}
\frac{
|\tilde{u}_{n}-\tilde{u}|^{2^{*}_{s,\theta_{i}}}
}
{|x|^{\theta_{i}}}
\mathrm{d}x
=0,~\mathrm{for~all}~
i=1,\ldots,k.
\end{aligned}
\end{equation}
According to
$\langle I^{'}(\tilde{u}_{n}),\tilde{u}_{n}\rangle=o(1)$,
$\langle I^{'}(\tilde{u}),\tilde{u}\rangle=0$
and Br\'{e}zis--Lieb lemma,
we obtain
\begin{equation*}
\begin{aligned}
o(1)=&
\langle I^{'}(\tilde{u}_{n}),\tilde{u}_{n}\rangle
-
\langle I^{'}(\tilde{u}),\tilde{u}\rangle\\
=&
\|\tilde{u}_{n}-\tilde{u}\|_{\zeta}^{2}
-\sum_{i=1}^{k}
\int_{\mathbb{R}^{N}}
\frac{
|\tilde{u}_{n}-\tilde{u}|^{2^{*}_{s,\theta_{i}}}
}
{|x|^{\theta_{i}}}
\mathrm{d}x
+o(1),
\end{aligned}
\end{equation*}
which implies that
\begin{equation}\label{30}
\begin{aligned}
\lim\limits_{n\rightarrow\infty}
\|\tilde{u}_{n}-\tilde{u}\|_{\zeta}^{2}
=
\lim\limits_{n\rightarrow\infty}
\int_{\mathbb{R}^{N}}
\frac{
|\tilde{u}_{n}-\tilde{u}|^{2^{*}_{s,\theta_{i}}}
}
{|x|^{\theta_{i}}}
\mathrm{d}x+o(1).
\end{aligned}
\end{equation}
Combining
(\ref{29})
and
(\ref{30}),
we get
\begin{equation*}
\begin{aligned}
\lim\limits_{n\rightarrow\infty}
\|\tilde{u}_{n}-\tilde{u}\|_{\zeta}^{2}
=
0.
\end{aligned}
\end{equation*}
Since
$\tilde{u}\not\equiv0$,
we know that
$\tilde{u}_{n}\rightarrow \tilde{u}$
strongly in
$D^{s,2}(\mathbb{R}^{N})$.

\noindent
{\bf Step 5.}
By using \eqref{28} again,
we know
$I(\tilde{u})=c$,
which means that $\tilde{u}$ is a nontrivial solution of problem $(\mathcal{P})$ at the
energy level $c$.
We have
\begin{equation*}
\begin{aligned}
0=&
\langle I^{'}(\tilde{u}),\tilde{u}^{-}\rangle\\
=&
\int_{\mathbb{R}^{N}}
\int_{\mathbb{R}^{N}}
\frac{(\tilde{u}(x)-\tilde{u}(y))(\tilde{u}^{-}(x)-\tilde{u}^{-}(y))}{|x-y|^{N+2s}}
\mathrm{d}x
\mathrm{d}y
-
\zeta
\int_{\mathbb{R}^{N}}
\frac{\tilde{u}\tilde{u}^{-}}{|x|^{2s}}
\mathrm{d}x
\\
&
-
\sum_{i=1}^{k}
\int_{\mathbb{R}^{N}}
\frac{|\tilde{u}|^{2^{*}_{s,\theta_{i}}-2}\tilde{u}\tilde{u}^{-}}
{|x|^{\theta_{i}}}
\mathrm{d}x,
\end{aligned}
\end{equation*}
where
$\tilde{u}^{-}=\max\{0,-\tilde{u}\}$.
For a.e.
$x,y\in \mathbb{R}^{N}$,
we obtain
$$(\tilde{u}(x)-\tilde{u}(y))(\tilde{u}^{-}(x)-\tilde{u}^{-}(y))\leqslant -|\tilde{u}^{-}(x)-\tilde{u}^{-}(y)|^{2}.$$
Then,
we get
\begin{equation*}
\begin{aligned}
0\leqslant
-
\|\tilde{u}^{-}\|_{D}^{2}
-
\zeta
\int_{\mathbb{R}^{N}}
\frac{|\tilde{u}^{-}|^{2}}{|x|^{2s}}
\mathrm{d}x
-
\sum_{i=1}^{k}
\int_{\mathbb{R}^{N}}
\frac{|\tilde{u}^{-}|^{2^{*}_{s,\theta_{i}}}}
{|x|^{\theta_{i}}}
\mathrm{d}x
\leqslant
-\|\tilde{u}^{-}\|_{D}^{2}.
\end{aligned}
\end{equation*}
Thus,
$\|\tilde{u}^{-}\|_{D}^{2}=0$.
Hence,
we can choose
$\tilde{u}\geqslant0$.
\qed

\end{document}